\newtheorem{pro}{Proposition}[section]
\newtheorem{teo}[pro]{Theorem}
\newtheorem{defi}[pro]{Definition}
\newtheorem{lem}[pro]{Lemma}
\newtheorem{cor}[pro]{Corollary}
\newtheorem{rk}[pro]{Remark}
\newtheorem{ex}[pro]{Example}
\newcommand{\C}{\mathcal{C}}
\newcommand{\Q}{\mathbb{Q}}
\newcommand{\F}{\mathfrak{F}}
\newcommand{\Z}{\mathbb{Z}}
\newcommand{\cP}{\mathcal{P}}
\newcommand{\modu}{\mathrm{mod}}
\newcommand{\Modu}{\mathrm{Mod}}
\newcommand{\proj}{\mathrm{proj}}
\newcommand{\diag}{\mathrm{diag}}
\newcommand{\Mat}{\mathrm{Mat}}
\newcommand{\Ext}{\mathrm{Ext}}
\newcommand{\Hom}{\mathrm{Hom}}
\newcommand{\End}{\mathrm{End}}
\newcommand{\Ker}{\mathrm{Ker}}
\newcommand{\Coker}{\mathrm{CoKer}}
\newcommand{\Ima}{\mathrm{Im}}
\newcommand{\pd}{\mathrm{pd}}
\newcommand{\findim}{\mathrm{fin.dim}}
\newcommand{\Tr}{\mathrm{Tr}}
\newcommand{\rad}{\mathrm{rad}}
\newcommand{\add}{\mathrm{add}}
\newcommand{\ra}{\rightarrow}
\newcommand{\ssa}{$(\Lambda, \, \leq)\ $}
\newenvironment{dem}{\noindent\bf Proof. \rm }{$\ \Box$}
\begin{document}
\title[Cokernels of the Cartan Matrix and Stratifying Systems]{Cokernels of the Cartan Matrix and Stratifying Systems}
\author{E.N.Marcos, O. Mendoza, C. S\'aenz.}
\thanks{2010 {\it{Mathematics Subject Classification}}. Primary 16G10. Secondary 18G99.\\
The authors thanks the Project PAPIIT-Universidad Nacional Aut\'onoma de M\'exico IN103317. The first named author  was partially supported by the Projeto tem\'atico FAPESP 2014/09310-5}
\footnote{We thank Maria Izabel Ramalho Martins (Bel), for her friendship and hospitality.}
\begin{abstract} We study the cokernel of the application given by the Cartan Matrix $C_\Lambda$ of a finite dimensional $k$-algebra $\Lambda.$ This produces a finitely generated abelian group, the Cartan group $G_\Lambda,$ which is invariant under derived equivalences. We are interested in the case when 
$G_\Lambda$ is finite. For a standardly stratified algebra, it is shown that this group is always finite and  some interesting connections with the standard modules are found. As a consequence, it is got that $G_\Lambda$ can be seen as a measure of how far is a standardly stratified algebra $\Lambda$ to be quasi-hereditary. 
Finally, it is also shown that any finite abelian group can be realized as the Cartan group of some standardly stratified algebra.
\end{abstract}
\maketitle
\section{Introduction.}
The Cartan matrix $C_\Lambda$ of a finite dimensional $k$-algebra $\Lambda$ has been an important tool to be used in different contexts appearing in representation theory of algebras, homological algebra and graphs. For example, it is well known that if $\Lambda$ is a  finite dimensional $k$-algebra of 
finite global dimension, then $\mathrm{det}\,C_\Lambda=\pm 1.$ There is a conjecture, which states that $\mathrm{det}\,C_\Lambda= 1$ for any 
finite dimensional $k$-algebra $\Lambda$ of finite global dimension. This homological conjecture, for the Cartan matrix, is still open and only partial results have been obtained so far, see for example in  \cite{W,Z}. There are other notions that are constructed by using the Cartan matrix $C_\Lambda,$ namely, the Euler characteristic and the Coxeter transformation of $\Lambda,$ which play an important role in the Auslander-Reiten theory \cite{ARS, ASS}. Thus, the idea of obtaining new concepts, by using the Cartan matrix, seems to be very fruitful in the search of useful tools.
\

In this paper, the term algebra means finite dimensional $k$-algebra and all the modules to be considered are finite dimensional left modules. Given an algebra $\Lambda,$ the aim of this paper is to assign, in a useful way, a finitely generated  abelian  group $G_\Lambda$ (the Cartan group) and  to study the properties that this group has by considering some classes of algebras.  The class we are interested in is the one given by the standardly stratified algebras, which were introduced by V. Dlab \cite{Dlab}. We point out, that this class of algebras has been extensively studied 
\cite{ADL, AHLU, Frisk1, Frisk2, FriskMaz, FKM, Mazor1, Mazor2,Mazor3,MazOv,MazPark,MSX,MenSan,Webb,Xi} and contains the very important subclass of quasi-hereditary algebras introduced by L. Scott in \cite{S}.
\

Let $\Lambda$ be a $k$-algebra. We denote by $\modu\,(\Lambda)$ the category of finitely generated $\Lambda$-modules and $\proj\,(\Lambda)$ the class of finitely generated projective $\Lambda$-modules. For any class $\C$ in $\modu\,(\Lambda),$ which is closed under extensions and direct summands, the Grothendieck group of $\C$ is denoted by $K_0(\C).$ The Cartan map is the $\Z$-linear transformation 
 $$C=C_{\Lambda}:K_0(\proj\,(\Lambda))\to K_0(\Lambda),\quad [P_i]\mapsto\dim(P_i),$$ 
 where $\{P_i\}_{i=1}^n$ is a complete set of pairwise non isomorphic indecomposable objects in $\proj\,(\Lambda)$ and 
 $K_0(\Lambda):=K_0(\modu\,(\Lambda)).$ We define the Cartan group $G_\Lambda$ of $\Lambda$ as the cokernel of the  Cartan map 
 $C_\Lambda.$ Note that $G_\Lambda$ is a finitely generated abelian group, since the abelian groups $K_0(\proj\,(\Lambda))$ and 
 $K_0(\Lambda)$ are both isomorphic to $\mathbb{Z}^n.$ We identify any matrix 
$C\in\Mat_{n\times n}(\Z),$ with the $\Z$-linear transformation $\Z^n\to\Z^n,$ $X\mapsto CX.$ In section 2, for the sake of completeness, we review some facts and notions related to
Grothendieck groups, standardly stratified systems and  related facts.
\

In section 3, we recall some well known results and notations concerning with the theory of finitely generated abelian groups, which  are fundamental 
for the development of the paper. For $D\in\Mat_{n\times n}(\Z),$ it is shown that the abelian group $G_D:=\Coker\,(D)$ is finite if and only if 
$\det\,(D)\neq 0.$ Moreover, if $G_D$ is finite, then $|G_D|=|\det\,(D)|.$
\

Let $(\Theta,\underline{Q},\leq)$ be an Ext-projective stratifying system  of size $t$ in $\modu\,(\Lambda).$ and let $B=\End_\Lambda(Q)^{op}.$  
By \cite[Lemma 2.1]{MMSZ}, it is known that the Grothendieck group $K_0(\F(\Theta))$ is free of rank $t,$ with a basis formed by  each  image 
$[\Theta(i)]$ of $\Theta(i)$ under the canonical map $F(\F(\Theta))\to  K_0(\F(\Theta)),$ where  $F(\F(\Theta))$ is the free abelian group on the $\Theta$-filtered $\Lambda$-modules,  we give more details  in sections 2 and  4. Consider the $\Theta$-Cartan matrix $C_\Theta\in\Mat_{t\times t}(\Z),$ where $[C_\Theta]_{ij}:=\dim_k\Hom_\Lambda(Q(i),\Theta(j)).$ One of the main result, in section 4, can be summarized as follows (see Theorem \ref{Mepss}).
\

{\bf Theorem A} Let $(\Theta,\underline{Q},\leq)$ be an Ext-projective stratifying system of size $t$ in $\modu\,(\Lambda),$ and let $B:=\End_\Lambda(Q)^{op}.$ Then, the following statements hold true.
 \begin{itemize}
 \item[(a)] $G_B\simeq \Coker\,(C_{\Theta})$ and $|G_B|=\prod_{i=1}^t\,\dim_k\End_\Lambda(\Theta(i)).$
 \item[(b)] The exponent of $G_B$ is a multiple of $\dim_k\End_\Lambda(\Theta(i)),$ for any $i\in[1,t].$
 \end{itemize}
 
 There are several consequences that can be obtained from the result above. We enumerate and summarize them in the following way. For a complete 
 description and proof, the reader can see in Corollary \ref{DiagD}, Corollary \ref{A1}, Remark \ref{DiagD} and Corollary \ref{A2}.
 \
 
 {\bf Corollary B} Let $(\Lambda,\leq)$ be a standardly stratified $k$-algebra. Then, the following statements hold true.
 \begin{itemize}
 \item[(a)] $G_\Lambda\simeq \Coker\,(C_\Delta)$ and $|G_\Lambda|=\prod_{i=1}^n\,\dim_k\End_\Lambda(\Delta(i)).$
 \item[(b)] If $(\Lambda,\leq)$ is weakly triangular, then $C_\Delta=\diag(d_1,\cdots, d_n)$ and $G_\Lambda = \bigoplus^n_{i=1}\frac{\Z}{d_i\Z},$ where $d_i:=\dim_k\,(\Delta(i)).$ 
 \item[(c)] $(\Lambda, \leq)$ is quasi-hereditary if and only if $G_\Lambda=0.$
 \end{itemize}

It is quite important the case, see in section 4, when the $\Delta$-Cartan matrix $C_{{}_\Lambda\Delta}$ of a $k$-algebra $\Lambda$  is a diagonal matrix. In this situation, we obtain an explicit description of the Cartan group $G_\Lambda.$ The following result gives necessary and sufficient conditions, for a standardly stratified algebra 
$\Lambda,$ to have that the  $\Delta$-Cartan matrix be a diagonal one. A full statement and its proof is the Theorem \ref{EqDiagM}.
\

{\bf Theorem C} For a quotient path $k$-algebra  $\Lambda=kQ/I,$ where $I$ is an admissible ideal,  the following statements are equivalent.
 \begin{itemize}
  \item[(a)] $C_{{}_\Lambda\Delta}$ is a diagonal matrix and $\Lambda$ is a standardly stratified algebra.
  \item[(b)]  $\Lambda$ is isomorphic to a triangular matrix $k$-algebra  
  $\begin{pmatrix}
   R & 0\\
        M      & T
 \end{pmatrix},$ where $M$ is a $T-R$-bimodule. Moreover $R$ and $T$ are standardly stratified 
  $k$-algebras satisfying that their corresponding $\Delta$-Cartan matrices $C_{{}_R\Delta}$ and $C_{{}_T\Delta}$ are diagonal and $M\in\F({}_T\Delta).$
 \end{itemize}
 
 Section 5 is devoted to the study of the Cartan group $G_\Lambda$ and the $\Delta$-Cartan matrix $C_\Delta$ for  radical square zero path $k$-algebras $\Lambda.$  Such an algebra $\Lambda$  is of the form $\Lambda=kQ/J^2,$ where $J$ is the ideal 
of $kQ$ which is generated by the set of arrows $Q_1$ of the quiver $Q.$ Let $v\in Q_0$ be a vertex of $Q.$ The idempotent in $\Lambda,$ attached to the vertex $v,$ is denoted by 
$e_v.$ Then, associated with the vertex $v,$ we have: the projective $\Lambda$-module $P(v):=\Lambda e_v$ and the simple $\Lambda$-module 
$S(v):=P(v)/\rad\,P(v).$ The following result gives us the order of the group $G_\Lambda,$ see Theorem \ref{RS04} for a complete version. 
\

{\bf Theorem D} Let $\Lambda=kQ/J^2$ and $\leq$ be a linear order on $Q_0$ such that $(\Lambda,\leq)$ is a standardly stratified $k$-algebra. If $Q$ does not have  sinks then $|G_\Lambda|=\prod_{v\in Q_0}\,[P(v):S(v)].$
\

In order to state the last main result of this section, we need to recall the following notions. Let $\Lambda=kQ/I,$ where $I$ is an admissible ideal of 
$kQ.$ It can be defined a pre-order relation $\leq_Q,$ on the set of vertices $Q_0,$ as 
 follows. For $v,w$ in $Q_0,$ we set $v\leq_Q w$ if there is an oriented path $\gamma$ in $Q$ starting at $v$ and ending at $w.$ Let 
 $\leq$ be a pre-order relation on $Q_0.$ We say that $\leq$ is a {\bf refinement} of $\leq_Q$ if the relation $x\leq_Q y$ implies that $x\leq y.$ For 
 each vertex $v\in Q_0,$ we denote by $\mathrm{loop}(v)$ the number of loops in $Q$  starting at the vertex $v.$ It is said that  $v\in Q_0$ is a {\bf quasi-source} if there is not an arrow $w\to v$ in $Q_1,$ with $w\neq v.$ 
\
 
 The last result in section 5 is the
 Theorem \ref{TR3} and can be written as follows.
\

{\bf Theorem E} For $\Lambda=k Q/J^2,$ the following statements are equivalent.
\begin{itemize}
\item[(a)] $(\Lambda,\leq)$ is a standardly stratified $k$-algebra for some linear refinement $\leq$ of $\leq_Q.$
\item[(b)] $Q$ does not have proper oriented cycles and the possible loops in $Q$ appear only in quasi-sources.
\item[(c)] $(\Lambda,\leq)$ is a standardly stratified $k$-algebra for any linear refinement $\leq$ of $\leq_Q.$
\end{itemize}
Moreover, if one of the above equivalent conditions holds true, then the $\Delta$-Cartan matrix  $C_\Delta$ is diagonal. Moreover, for any $i\in Q_0$
 $$d_i:=[C_\Delta]_{ii}=1+\mathrm{loop}(i)=\dim_k\End_\Lambda(P(i))=\dim_k\End_\Lambda(\Delta(i)),$$
 and thus $G_\Lambda\simeq\bigoplus_{i=1}^n\,\Z/d_i\Z.$
\

Finally, throughout the paper there are several examples illustrating the results we have obtained so far.

\section{Preliminaries.}

Throughout this paper, the term $k$-algebra means finite dimensional $k$-algebra, for a fixed field $k.$ We recall that a $k$-algebra $\Lambda$ is {\bf elementary} if $\Lambda/\rad\,(\Lambda)\simeq k^n$ as $k$-algebras, for some natural number $n.$  In the case that the $k$-algebra $\Lambda$ be 
elementary, we have that  $\Lambda$ is isomorphic to the a quotient 
path $k$-algebra $kQ/I,$ for some admissible ideal $I$ \cite[III.1 Theorem 1.9]{ARS}.   The category of finitely generated left $\Lambda$-modules is 
denoted by $\modu\,(\Lambda).$ Unless otherwise specified, we will work  with finitely generated $\Lambda$-modules,
full subcategories and non-empty classes. We denote by $\proj\,(\Lambda)$ the full subcategory of 
$\modu\,(\Lambda),$ whose objects are the projective $\Lambda$-modules. 
\

{\sc Standardly stratified $k$-algebras.} Let $\Lambda$  be a $k$-algebra.  We fix a complete list $S_1, S_2,\cdots ,S_n,$   of pairwise 
non-isomorphic simple $\Lambda$-modules, and  a linear order $\leq$ on the set  $[1,n]:=\{1,2,\cdots,n\}.$ For each $i\in[1,n],$  $P_i$ is 
the projective cover of $S_i,$ and the $i$-th  standard module 
 is the quotient $\Lambda$-module $\Delta(i):=P_i/\Tr_{\oplus_{j>i} P_j}(P_i),$ where $\Tr_{\oplus_{j>i} P_j}(P_i)$ is the trace of $\oplus_{j>i} P_j$ in $P_i.$ It is well known that $\Delta(i)$ is the maximal quotient of $P_i$ with composition factors among 
the simple $\Lambda$-modules $S_j$ with $j\leq i.$ The set of the {\bf standard} $\Lambda$-modules is $\Delta=\{\Delta(1),\Delta(2),\cdots,\Delta(n)\}$ and depends on the given order $\leq$ on the set  $[1,n].$ 
\

For a given class $\Theta$ of $\Lambda$-modules,  we denote by $\F(\Theta)$ the full subcategory of $\modu\,(\Lambda)$ whose objects are  the
$\Lambda$-modules $M$ which have a $\Theta$-filtration. That is, $M\in\F(\Theta)$ is  there is a finite 
chain $$0=M_0\leq M_1\leq M_2\leq\cdots \leq M_t=M$$

\noindent of submodules of $M$ such that each quotient $M_i/M_{i-1}$ is isomorphic to a module in $\Theta.$ In case that the class 
$\Theta:=\Delta,$ the modules in $\F(\Delta)$ are called $\Delta-$good modules.  
\

 If ${}_\Lambda\Lambda\in\F(\Delta)$ then the pair $(\Lambda,\leq)$ is said to be a (left) {\bf standardly stratified} algebra.
A standardly stratified algebra is called quasi-hereditary if the endomorphism ring of each standard module is a division ring. Quasi-hereditary 
algebras were introduced in \cite{CPS, S} to deal with highest weight categories, which play an important role in the representation theory of Lie algebras and algebraic groups.
\

{\sc Stratifying systems.} Let $\Lambda$ be a $k$-algebra. The concept of Ext-projective stratifying system (epss, for short) was introduced  in \cite{ES, MMS2}. This is a generalization of the standard modules and it is very useful for constructing 
standardly stratified algebras. 
 
\begin{defi}\cite{MMS2}\label{epss}
Let $\Theta=\{\Theta(i)\}_{i=1}^t$ be a family of non-zero $\Lambda$-modules,
$\underline{Q}=\{Q(i)\}_{i=1}^t$ a family of indecomposable $\Lambda$-modules
and $\leq$  a linear order on $[1,t].$
The triple $(\Theta,\underline{Q}, \leq )$ is an Ext-projective stratifying system of size $t,$ in $\modu\,(\Lambda),$
if the following three conditions hold true
\begin{enumerate}
\item[(a)] $\mathrm{Hom}_\Lambda(\Theta(j),\Theta(i))=0$ for $j> i,$
\item[(b)] for each $i\in [1,t]$, there is an exact sequence in $\modu\,(\Lambda)$ 
$$0\to K(i)\rightarrow Q(i)\stackrel{\beta_i}\rightarrow\Theta(i) \to 0$$
such that  $K(i)\in\mathcal{F}(\{\Theta(j): j>i\}),$
\item[(c)] $\Ext^1_\Lambda(Q,-)|_{\F(\Theta)}=0$ for $Q:=\oplus_{i=1}^tQ(i).$
\end{enumerate}
\end{defi}

 It is shown in \cite{MMS2}  that $B:=\mathrm{End}({}_{R}Q)^{op}$ is  a standardly stratified algebra with respect to the given linear order $\leq$ on the set $[1,t].$ In this case, the standard $B$-modules are computed by using the family $\{{}_BP(i)\}_{i\in[1,t]}$ of projective $B$-modules, where 
 ${}_BP(i):=\Hom_\Lambda(Q,Q(i)).$ 
\

For any $k$-algebra $\Lambda,$ there exists  Ext-projective stratifying system of size $n:=rk\,K_0(\Lambda).$ The canonical one is of the  
form $(\Delta, \underline{Q},\leq).$ Moreover, the pair $(\Lambda,\leq)$ is a standardly stratified algebra if, and only  if, 
$\underline{Q}=\{P_i\}_{i=1}^n$ \cite{MMS2}. Note that in this case, the algebra $B:=\mathrm{End}({}_{R}Q)^{op}$ is morita equivalent to 
$\Lambda.$

{\sc The Cartan group.}  Let $\Lambda$ be a $k$-algebra. Consider a class  $\C\subseteq\modu\,(\Lambda)$ of $\Lambda$-modules, which is 
closed under extensions and direct summands. Let $F(\C)$ be the free abelian group on the objects of $\C,$ and let $R(\C)$ be the subgroup of 
$F(\C)$ generated by the elements $A+B-C$ if there is an exact sequence 
$0\to A\to C\to B\to 0.$ The Grothendieck group of $\C$ is the quotient $K_0(\C):=F(\C)/R(\C).$ 
\

The Grothendieck group of $\modu\, (\Lambda)$ will be denoted by $K_0(\Lambda)$. Recall that this group is free abelian  and its canonical basis is 
the set of iso-classes of simple $\Lambda$-modules $S_1,S_2,\cdots,S_n.$ In particular, it is isomorphic to $\Z^n.$ On the other hand, the Grothendieck group $K_0(\proj\,(\Lambda))$ is also isomorphic to the free abelian group $\Z^n, $ and its  canonical basis is given by the set of iso-classes of indecomposable projective 
$\Lambda$-modules $P_1,P_2,\cdots,P_n.$
\

 As usual,  for $M\in\modu\, (\Lambda),$ we also denote by $\dim(M)$ its class $[M]$ in $K_0(\Lambda).$ It can be seen that 
 $\dim(M)=\sum_{i=1}^n\,[M:S_i][S_i],$ where $[M:S_i]$ is the multiplicity of $S_i$ in $M.$  The {\bf Cartan map} is the $\Z$-linear transformation 
 $$C=C_{\Lambda}:K_0(\proj\,(\Lambda))\to K_0(\Lambda),\quad [P_i]\mapsto\dim(P_i).$$
The {\bf Cartan group} $G=G_\Lambda$ of $\Lambda$ is the cokernel of the  Cartan map $C_\Lambda.$ Note that $C_\Lambda^t=C_{\Lambda^{op}},$ since 
the functor $\Hom_\Lambda(-,\Lambda):\proj(\Lambda)\to\proj\,(\Lambda^{op})$ is a duality.
\

It is clear that, for an  algebra $\Lambda$  of finite global dimension, its Cartan group $G_\Lambda$ is zero, since $C_\Lambda$ is invertible. 
 \
 
 We state next, the following proposition of \cite{BHL}. We recall that two $k$-algebras $A$ and $B$ are derived equivalent if their corresponding 
 bounded derived categories $D^b(\modu\,(A))$ and $D^b(\modu\,(B))$ are equivalent as triangulated categories.

\begin{pro} \cite[Proposition 2.1]{BHL} \label{InvDer} Let $A$ and $B$ be  derived equivalent $k$-algebras. Then, there exists some integral invertible matrix $P$ such that $PC_AP^t=C_B.$
\end{pro}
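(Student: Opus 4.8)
The plan is to realize the Cartan matrix as the Gram matrix of the Euler form of the bounded derived category restricted to perfect complexes, and then to transport this form across the equivalence. First I would invoke Rickard's Morita theory for derived categories: since $A$ and $B$ are derived equivalent, there is a tilting complex $T=\bigoplus_{i=1}^{n}T_i$ in the homotopy category $K^b(\proj\,(A))$ of perfect complexes, with each $T_i$ indecomposable, such that $B\simeq\End_{D^b(\modu\,(A))}(T)^{op}$. Under this identification the indecomposable projective $B$-modules $Q_i$ correspond to the summands $T_i$ via $Q_i=\Hom_{D^b(\modu\,(A))}(T,T_i)$, so by (derived) Yoneda one gets $\dim_k\Hom_B(Q_i,Q_j)=\dim_k\Hom_{D^b(\modu\,(A))}(T_i,T_j)$. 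Moreover $n$ is simultaneously the number of indecomposable projective $A$-modules and of $B$-modules, since the rank of $K_0$ is a derived invariant.

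Next I would bring in the Euler form $\langle X,Y\rangle:=\sum_{p}(-1)^p\dim_k\Hom_{D^b(\modu\,(A))}(X,Y[p])$, which is well defined as soon as $X$ or $Y$ is perfect, the sum being finite because a bounded complex of projectives admits no maps in arbitrarily high degrees; in particular no hypothesis on the global dimension of $A$ is needed. Two observations then drive the computation. On one hand, the defining property of a tilting complex kills the off-diagonal degrees, $\Hom_{D^b(\modu\,(A))}(T_i,T_j[p])=0$ for $p\ne 0$, so that $\dim_k\Hom_B(Q_i,Q_j)=\langle T_i,T_j\rangle$. On the other hand, for the indecomposable projectives $P_k$ viewed as complexes concentrated in degree $0$ one has $\langle P_k,P_l\rangle=\dim_k\Hom_A(P_k,P_l)=[C_A]_{kl}$, the last equality holding because $\End_A(S_i)=k$ for the elementary algebras considered here (over a general ground field a diagonal correction by the factors $\dim_k\End_A(S_i)$ intervenes). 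Thus the Gram matrix of the Euler form in the basis $\{[P_k]\}$ of $K_0(\proj\,(A))$ is exactly $C_A$, and the analogous statement holds for $B$ with $\{[Q_i]\}$ and $C_B$.

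Finally I would expand each class $[T_i]=\sum_j a_{ji}[P_j]$ in $K_0(\proj\,(A))$ and set $P:=(a_{ji})$. Bi-additivity of the Euler form then yields $[C_B]_{ij}=\langle T_i,T_j\rangle=\sum_{k,l}a_{ki}[C_A]_{kl}a_{lj}=(P^tC_AP)_{ij}$, that is $C_B=P^tC_AP$, which upon replacing $P$ by its transpose is precisely the asserted congruence $PC_AP^t=C_B$. The one point genuinely requiring the full strength of Rickard's theorem is that $P\in GL_n(\Z)$: because $\add T$ generates $K^b(\proj\,(A))$ as a thick triangulated subcategory, the classes $[T_1],\dots,[T_n]$ generate $K_0(\proj\,(A))\cong\Z^n$, and $n$ generators of $\Z^n$ necessarily form a basis, forcing $\det P=\pm 1$. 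I expect the main obstacle to be exactly this invertibility bookkeeping, together with cleanly passing from the abstract triangulated equivalence to an honest tilting complex and the identification $\dim_k\Hom_B(Q_i,Q_j)=\dim_k\Hom_{D^b(\modu\,(A))}(T_i,T_j)$; the homological computations themselves are routine once the Euler form is set up on perfect complexes, and the only delicate hypothesis is the elementary (or algebraically closed) setting that makes the Gram matrix equal to the Cartan matrix on the nose.
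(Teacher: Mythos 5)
The paper does not prove this proposition at all: it is imported verbatim from \cite[Proposition 2.1]{BHL} and used as a black box, so your proposal should be measured against the standard argument behind that citation --- which is exactly what you reconstruct: Rickard's tilting complex $T=\bigoplus_{i=1}^n T_i$, the Euler form on perfect complexes, the identification of the Cartan matrices of $A$ and $B$ with the Gram matrices of this form in the bases $\{[P_i]\}$ and $\{[T_i]\}$, and bilinearity to produce the congruence. Those steps are correct, and your caveat about conventions is apt: with this paper's definition of $C_\Lambda$ via composition multiplicities, the Gram-matrix identification needs $\End_A(S_i)=k$ (automatic for the elementary quotient path algebras the paper actually works with), whereas \cite{BHL} define the Cartan matrix by $\dim_k\Hom$ of projectives, for which no such hypothesis is needed.

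There is, however, one genuine gap: your justification that $P\in \mathrm{GL}_n(\Z)$. You argue that because $\add\,T$ generates $K^b(\proj\,(A))$ as a thick subcategory, the classes $[T_1],\dots,[T_n]$ must generate $K_0(K^b(\proj\,(A)))\simeq\Z^n$. This implication is false in general: closing under direct summands is invisible in $K_0$, and by Thomason's classification of dense triangulated subcategories, a thickly generating family can generate a proper subgroup of $K_0$. A concrete counterexample: for $\Lambda=k[x]/(x^2)$, the two-term complex $C=(\Lambda\xrightarrow{\,x\,}\Lambda)$ is a nonzero object of $K^b(\proj\,(\Lambda))$, hence generates it as a thick subcategory (by Hopkins--Neeman, over a commutative artinian local ring every nonzero perfect complex is a classical generator), yet $[C]=[\Lambda]-[\Lambda]=0$, so the subgroup generated by its class is $0\neq\Z\simeq K_0$. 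The conclusion you want is nevertheless true, and the repair uses nothing beyond what you already invoke: Rickard's theorem gives a triangle equivalence $K^b(\proj\,(B))\simeq K^b(\proj\,(A))$ carrying $Q_i$ to $T_i$; any triangle equivalence induces an isomorphism of Grothendieck groups, and $\{[Q_i]\}_{i=1}^n$ is a basis of $K_0(K^b(\proj\,(B)))\simeq K_0(\proj\,(B))\simeq\Z^n$ via the Euler-characteristic isomorphism, so $\{[T_i]\}_{i=1}^n$ is a basis of $K_0(K^b(\proj\,(A)))$ and the base-change matrix $P$ is invertible over $\Z.$ With that substitution your proof is complete and agrees with the argument the cited source has in mind.
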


\begin{rk} Let $A$ and $B$ be finite dimensional $k$-algebras, which are derived equivalent. Then, by Proposition \ref{InvDer} we get that $G_A\simeq G_B.$
\end{rk}

 We are interested in the case that the Cartan map $C_\Lambda$ has a non vanishing determinant, and hence by Lemma \ref{Eq} it is equivalent to say that the Cartan group 
 $G_\Lambda$ is finite. In order to prove that, we use  various elementary results of presentations of finite abelian groups. In section 3 we deal with 
this and also some proofs are given for the sake of completeness.
\

{\sc Weakly-triangular $k$-algebras.}
Let $\Lambda=kQ/I$ be a quotient path $k$-algebra, where $I$ is an admissible ideal. Let $\leq$ be  a linear order on the set of vertices
$Q_0:=\{v_1, v_2,\ldots,v_n\}.$  We recall that the algebra $\Lambda$ is {\bf weakly triangular}, with respect to
the partially ordered set $(Q_0,\leq),$  if $\Hom_\Lambda(P(v_i),P(v_j))$ is equal to zero for $v_i<v_j,$ where $P({v_t}):=\Lambda e_{v_t}$ and $e_{v_t}$ is
the idempotent associated with the vertex $v_t$, for each $t\in[1,n].$ In this case, we also say that the pair $(\Lambda,\leq)$ is a weakly triangular algebra. 
If in addition, $Q$ does not have oriented cycles, it is said that $(\Lambda,\leq)$ is a {\bf triangular} algebra. Note that a weakly triangular algebra 
$(\Lambda,\leq)$ does not have to be standardly stratified, however $(\Lambda,\leq^{op})$ is   standardly stratified.  
\

As a consequence of the following proposition, we get that the only possible oriented cycles in a weakly triangular algebra are the concatenations 
of loops. In order to state and prove this result, we recall the following notions. An {\bf oriented cycle} in $Q$ is {\bf proper} if it does not have loops, and a vertex $v\in Q_0$ is a {\bf quasi-source} if there is not an arrow $w\to v$ in $Q_1,$ with $w\neq v.$ 

\begin{pro}\label{WT} Let $\Lambda=kQ/I$ be a quotient path $k$-algebra, where $I$ is an admissible ideal. Then, $Q$ does not have proper oriented 
cycles if, and only if, there is a linear order $\leq$ on $Q_0$ such that $(\Lambda,\leq)$ is a weakly triangular algebra. 
\end{pro}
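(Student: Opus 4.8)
The plan is to translate weak triangularity into a purely combinatorial condition on the quiver and then reduce to the existence of a topological ordering of an acyclic directed graph. The essential dictionary is the standard isomorphism $\Hom_\Lambda(P(v_i),P(v_j))\cong e_{v_i}\Lambda e_{v_j}$ for the indecomposable projectives $P(v_t)=\Lambda e_{v_t}$; with the usual convention for left $kQ/I$-modules, a $k$-basis of $e_{v_i}\Lambda e_{v_j}$ is given by the residue classes of the nonzero paths from $v_j$ to $v_i$ in $\Lambda=kQ/I$ (the opposite convention merely reverses the order below). Thus $\Hom_\Lambda(P(v_i),P(v_j))\neq 0$ exactly when there is a nonzero path from $v_j$ to $v_i$, and reading off the definition, $(\Lambda,\leq)$ is weakly triangular if and only if every nonzero path from $a$ to $b$ with $a\neq b$ forces $a<b$. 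I would also record the single place where admissibility of $I$ enters: since $I\subseteq J^2$, no arrow lies in $I$, so every arrow $a\to b$ with $a\neq b$ is a nonzero path and hence contributes a nonzero $\Hom$.

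For the implication that a weakly triangular order forces the absence of proper cycles, I would argue by contradiction. Given such an order $\leq$, the observation above shows that every non-loop arrow $a\to b$ satisfies $a<b$. A proper oriented cycle $v_0\to v_1\to\cdots\to v_m=v_0$ consists, by definition, only of non-loop arrows, so it yields a chain $v_0<v_1<\cdots<v_m=v_0$, that is $v_0<v_0$, which is absurd. This is the easy direction and needs nothing beyond the fact that arrows are nonzero.

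For the converse, suppose $Q$ has no proper oriented cycle. Deleting all loops from $Q$ produces a quiver $\bar{Q}$ which is acyclic, since any oriented cycle of $\bar{Q}$ would be a proper cycle of $Q$. As $Q_0$ is finite, $\bar{Q}$ admits a topological ordering: a linear order $\leq$ on $Q_0$ with $a<b$ for every arrow $a\to b$ of $\bar{Q}$. I would then verify that this $\leq$ makes $(\Lambda,\leq)$ weakly triangular. Any path from $a$ to $b$ with $a\neq b$ decomposes into arrows, each step either fixing the current vertex (a loop) or strictly increasing it (a non-loop arrow); since the endpoints differ, at least one step is strict, whence $a<b$. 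In particular there is no nonzero path from $v_j$ to $v_i$ whenever $v_i<v_j$, which is exactly the required vanishing $\Hom_\Lambda(P(v_i),P(v_j))=0$. This order is in fact a linear refinement of $\leq_Q$, which will be convenient for the later results.

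The only delicate point is the discrepancy between ``a path exists in $Q$'' and ``a path is nonzero in $\Lambda$'': the $\Hom$-spaces detect only the latter, whereas $\leq_Q$ and the cycle condition are phrased for the former. I expect this to be the main thing to keep straight, but it dissolves on inspection, since the backward direction uses only arrows, which are automatically nonzero by admissibility, while the forward direction controls all paths of $Q$ simultaneously through the topological order, so the nonzero paths of $\Lambda$ are controlled a fortiori. Finiteness of $Q_0$ is what guarantees the existence of the topological order in the converse.
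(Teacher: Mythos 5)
Your proof is correct, and while both directions rest on the same combinatorial skeleton as the paper's, your route is packaged genuinely differently. For the existence of the order, the paper argues by an algebraic induction: it first shows a quiver without proper oriented cycles has a quasi-source $v_1$, notes $\Hom_\Lambda(P(v_1),P(j))\simeq e_{v_1}\Lambda e_j=0$ for $j\neq v_1$, rewrites $\Lambda$ as a $2\times 2$ triangular matrix algebra whose corner is $\End_\Lambda\bigl(\bigoplus_{j\neq v_1}P(j)\bigr)^{op}$, and iterates on that corner -- in effect a topological sort carried out inside the algebra, one quasi-source at a time. You instead establish the dictionary once (weak triangularity is equivalent to: every nonzero path between distinct vertices strictly ascends), delete the loops, and invoke the standard existence of a topological ordering of a finite acyclic digraph, verifying the required vanishing in one stroke because every path is a composition of loops (which fix the vertex) and non-loop arrows (which strictly increase it). Your version is more elementary and keeps the algebra out of the induction; the paper's version buys the explicit triangular matrix decomposition, which is exactly the form reused later (compare Theorem \ref{EqDiagM}). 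For the other direction you are also slightly leaner: since each non-loop arrow ascends, a proper cycle gives $v_0<v_0$ outright, whereas the paper first reduces to a minimal cycle and extracts a descent $v_{j_0+1}<v_{j_0}$. Two small remarks: you correctly isolate that admissibility enters only to guarantee arrows survive in $\Lambda$ (the paper uses this silently in ``there is an arrow $v_1\to v_2$, thus $\Hom_\Lambda(P(v_2),P(v_1))\neq 0$''); and your parenthetical claim that the nonzero paths form a $k$-basis of $e_{v_i}\Lambda e_{v_j}$ is slightly too strong in general, since relations in $I$ may identify distinct nonzero paths -- but your argument only needs that these classes span and that the class of an arrow is nonzero, both of which are correct.
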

\begin{dem} $(\Rightarrow)$ Assume that $Q$ does not have proper oriented cycles. We prove firstly that $Q$ has quasi-sources. Indeed, suppose that 
$Q$ does not have quasi-sources. Fix some $v_1\in Q_0.$ Then there is an arrow $v_2\to v_1,$ with $v_2\neq v_1.$ Since, it is supposed that 
$Q$ does not have quasi-sources, we can repeat this procedure and thus we get an oriented path $v_n\to v_{n-1}\to \cdots\to v_2\to v_1,$ without loops, where $n=|Q_0|.$ Since $v_n$ is not a quasi-source, there is an arrow $v_j\to v_n$ with $v_j\neq v_n.$ Hence we get a proper oriented 
cycle $v_j\to v_{n}\to v_{n-1}\to \cdots\to v_j,$ which is a contradiction proving that $Q$ has quasi-sources.
\

In order to construct a linear order $\leq$ on $Q_0$ in such a way that $(\Lambda,\leq)$ be weakly triangular, we proceed as follows. Firstly, we 
choose a quasi-source $v_1\in Q_0.$ Then, for any vertex $j\in Q_0-\{v_1\}$ there are not oriented paths from $j$ to $v_1$ and thus 
$$\Hom_\Lambda(P(v_1), P(j))\simeq e_{v_1}\Lambda e_j=0\quad\text{for all }\, j\in Q_0-\{v_1\}.$$ In particular, $\Lambda$ is isomorphic to a matrix triangular $k$-algebra 
$\begin{pmatrix} \Sigma & 0\\ T & \Gamma \end{pmatrix},$
where $\Sigma:=\End_\Lambda(P(v_1))^{op},$ $\Gamma:=\End_\Lambda(Q(v_1))^{op},$ $Q(v_1):=\bigoplus_{j\in Q_0-\{v_1\}}\,P(j)$ and 
$T:=\Hom_\Lambda(Q(v_1),P(v_1)).$ Note that $\Gamma$ satisfies the same conditions as $\Lambda$ does, $Q_\Gamma=Q_\Lambda-\{v_1\}$ 
and $\Hom_\Lambda(\Lambda e_i,\Lambda e_j)\simeq \Hom_\Gamma(\Gamma e_i,\Gamma e_j)$ for any vertices $i,j\in Q_\Gamma.$ We choose now 
a quasi-source $v_2$ in $Q_\Gamma.$ Then, as before $\Hom_\Lambda(P(v_2),P(j))\simeq \Hom_\Gamma(P(v_2),P(j))=0$ for any 
$j\in Q_\Gamma-\{v_2\}.$ Therefore we set $v_1<v_2,$ and by iterating this procedure, we get a linear order $v_1<v_2<\cdots<v_n$ on $Q_0$ 
satisfying that $\Hom_\Lambda(P(v_i),P(v_j))=0$  for $v_i<v_j.$
\

$(\Leftarrow)$ Let $(\Lambda,\leq)$ be weakly triangular algebra. Suppose there is a proper oriented cycle $C.$ Without loss of generality, we may assume that $C$ is minimal, that is $C=v_1\to v_2\to \cdots\to v_n\to v_1$ and all the vertices in this cycle are different to each other.  We consider the numeration  of the vertices $v_j$ in $C,$ with $j\in\Z/n\Z,$ and thus $v_{n+1}=v_1.$ We assert there exist some $j_0\in\Z/n\Z$ such that 
$v_{j_0+1}<v_{j_0}.$ If this were not the case, we would have $v_1<v_2<\cdots v_n<v_{n+1}=v_1,$ which is a contradiction that proves our assertion. It can be assumed that $j_0=1.$ Then there is an arrow $v_1\to v_2,$ and thus $\Hom_\Lambda(P(v_2),P(v_1))\neq 0,$ with $v_2<v_1.$ This is a contradiction with the fact that 
$(\Lambda,\leq)$ is weakly triangular. Therefore there is not a proper oriented cycle in $Q.$
\end{dem}

 \section{Finite Abelian Groups}
 In this section, we introduce some well known notations and results that are fundamental for the development of the paper. 
 
 \begin{defi}
  A  {\bf quasi-elementary sequence}, of length $m\geq 1,$ is a sequence of non-negative integers $(f_1, \cdots, f_m)$  such that 
  $f_i \mid f_{i+1},$ for all 
  $i\in[1, m-1].$ If in addition, all the $f_i$ are positive, it is said that this sequence is  {\bf elementary}.
 \end{defi}

One of the main properties of integral square matrices can be summarized in the following well known result. 

\begin{pro}\label{V} Every matrix $C\in\Mat_{n\times n}(\Z)$ can be reduced, by integral elementary row and column transformations, to the form 
$\diag\,(f_1,f_2,\cdots,f_n),$ where $(f_1,f_2,\cdots,f_n)$ is a quasi-elementary sequence. Furthermore, the integers $f_1,f_2,\cdots,f_n$ are uniquely determined by the matrix $C.$
\end{pro}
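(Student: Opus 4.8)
The statement to prove is Proposition \ref{V}, the Smith Normal Form theorem for integer matrices. This is a classical result, so my plan is to reconstruct the standard proof using the language of elementary row and column operations already invoked in the statement.

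The plan is to prove existence by an algorithm based on repeatedly extracting a greatest common divisor and clearing a row and column, followed by uniqueness via invariant factors. First I would establish existence by induction on the size $n$. The key reduction step is to show that any nonzero matrix $C$ can be brought, by integral elementary row and column operations, to a block form $\diag(f_1, C')$ where $f_1$ divides every entry of the smaller matrix $C'$. To do this I would select among all matrices reachable from $C$ by elementary operations one having a nonzero entry $f_1$ of minimal absolute value, and move it to the $(1,1)$ position by row and column swaps. I then claim $f_1$ divides every entry in its row and column: if some entry in the first row were not divisible by $f_1$, the division algorithm would let me produce, via a single column operation subtracting a multiple of the first column, a nonzero remainder of strictly smaller absolute value, contradicting minimality; the same argument applies to the first column. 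Once the first row and column are cleared except for $f_1$, a further argument shows $f_1$ divides \emph{every} remaining entry $c_{ij}$: if not, adding row $i$ to the first row reintroduces a non-multiple of $f_1$ into the first row, and the previous step again yields a smaller nonzero entry, a contradiction. Applying the induction hypothesis to $C'$ and observing that $f_1 \mid f_2$ (since $f_1$ divides all entries of $C'$) completes the existence proof.

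For the case where $C$ is the zero matrix the claim is trivial with all $f_i = 0$, and allowing trailing zeros in the sequence accounts for matrices of rank less than $n$; the divisibility $f_i \mid f_{i+1}$ is preserved because $0$ is divisible by $0$ under the convention adopted in the definition of a quasi-elementary sequence. To establish uniqueness, I would pass to the standard invariant-factor description: for each $k$, let $\delta_k(C)$ denote the greatest common divisor of all $k \times k$ minors of $C$ (with $\delta_0 = 1$). The central observation is that each $\delta_k$ is unchanged by integral elementary row and column operations, since such operations send $k\times k$ minors to integral linear combinations of $k\times k$ minors and are invertible over $\Z$. Evaluating $\delta_k$ on the diagonal form $\diag(f_1,\dots,f_n)$ gives $\delta_k = f_1 f_2 \cdots f_k$ (using that $f_i \mid f_{i+1}$, so the product of the first $k$ diagonal entries is the gcd of all size-$k$ minors). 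Hence $f_k = \delta_k(C)/\delta_{k-1}(C)$ is determined by $C$ alone, which yields uniqueness.

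I expect the main obstacle to be the divisibility step in the existence argument, namely verifying that the minimal entry $f_1$ divides \emph{all} remaining entries and not merely those in its own row and column; this is the point where the "add a row to create a non-multiple in the first row, then reduce" trick is essential, and it must be stated carefully so that the contradiction with minimality of $|f_1|$ is genuine. A secondary technical point, on the uniqueness side, is confirming that $\delta_k(\diag(f_1,\dots,f_n)) = f_1\cdots f_k$; this requires noting that any $k\times k$ submatrix of a diagonal matrix is itself diagonal (up to the chosen rows and columns coinciding) so its determinant is a product of $k$ of the $f_i$, and the divisibility chain forces $f_1 \cdots f_k$ to be the minimal such product and to divide all of them. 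Given that Proposition \ref{V} is standard, the write-up can be kept brief and the reader referred to a textbook for the routine verifications, but the divisibility step deserves explicit attention.
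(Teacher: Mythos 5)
Your proof is correct, but note that the paper does not actually prove this statement: its entire ``proof'' is the citation \cite[Proposition 9.13, Remark 9.16]{V}, the result being standard background on Smith normal form over $\Z$. What you have written out is precisely the classical argument such a reference contains: existence by induction on $n$, taking a nonzero entry of minimal absolute value among all matrices reachable by integral elementary operations, using the division algorithm to clear its row and column, and using the ``add row $i$ to the first row'' trick to force $f_1$ to divide \emph{every} remaining entry (you correctly single this out as the delicate step); then uniqueness via the determinantal divisors $\delta_k$ (gcds of $k\times k$ minors), which are invariant under integral elementary operations since these send minors to integral combinations of minors and are invertible over $\Z$. The one point needing an extra sentence is your uniqueness formula $f_k=\delta_k(C)/\delta_{k-1}(C)$: when $k-1$ exceeds the rank of $C$ this reads $0/0$. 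The fix is immediate from what you already have: since $f_i\mid f_{i+1}$ and $0$ divides only $0$, one has $f_k=0$ if and only if $\delta_k(C)=0$, and $f_k=\delta_k/\delta_{k-1}$ in the remaining cases, so the $f_k$ are still determined by $C$. With that caveat your argument is complete and self-contained, which is more than the paper offers; the trade-off is length, and since the proposition is pure background for the paper's study of $G_\Lambda$, its choice to cite a textbook is equally defensible.
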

\begin{dem} \cite[Proposition 9.13, Remark 9.16]{V}.
\end{dem}
\vspace{0.2cm}

The following Lemma will be very useful in all that follows. We identify any matrix 
$C\in\Mat_{n\times n}(\Z),$ with the $\Z$-linear transformation $\Z^n\to\Z^n,$ $X\mapsto CX.$

\begin{lem}\label{Eq} For any $D\in \Mat_{n\times n}(\Z)$ and $G:=\Coker\,(D),$  the following statements are equivalent.
\begin{itemize}
\item[(a)] $\det\,(D)\neq 0.$
\item[(b)] $D$ is a monomorphism as $\Z$-linear transformation.
\item[(c)] The abelian group $G$ is finite.
\end{itemize}
\end{lem}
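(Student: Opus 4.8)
The plan is to prove the cycle of implications $(a)\Rightarrow(b)\Rightarrow(c)\Rightarrow(a)$, using Proposition \ref{V} (Smith normal form) as the principal computational tool. Throughout I identify $D$ with the linear map $\Z^n\to\Z^n$, $X\mapsto DX$, and I will exploit the fact that $D$ also induces a map $\Q^n\to\Q^n$ by extension of scalars, since injectivity over $\Z$ and over $\Q$ are closely tied for a matrix with integer entries.

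First I would prove $(a)\Rightarrow(b)$. If $\det(D)\neq 0$, then $D$ is invertible as a $\Q$-linear transformation $\Q^n\to\Q^n$, hence injective on $\Q^n$. Since $\Z^n$ embeds in $\Q^n$ and the map on $\Z^n$ is the restriction of the map on $\Q^n$, injectivity on $\Q^n$ immediately forces injectivity on $\Z^n$; thus $D$ is a monomorphism as a $\Z$-linear transformation. For the contrapositive direction $(b)\Rightarrow(a)$, which I will also need, if $\det(D)=0$ then the columns of $D$ are $\Q$-linearly dependent, so there is a nonzero rational vector in the kernel, and clearing denominators yields a nonzero integer vector in the kernel, contradicting $(b)$.

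Next I would establish the equivalence of $(a)$ and $(c)$ by passing to Smith normal form. By Proposition \ref{V}, there exist invertible integral matrices $U,V$ (products of elementary row and column operations) with $UDV=\diag(f_1,\dots,f_n)$ for a quasi-elementary sequence $(f_1,\dots,f_n)$. Since $U$ and $V$ are invertible over $\Z$, they induce automorphisms of $\Z^n$, and therefore $\Coker(D)\simeq\Coker(\diag(f_1,\dots,f_n))\simeq\bigoplus_{i=1}^n \Z/f_i\Z$, with the convention $\Z/0\Z=\Z$. This cokernel is finite if and only if every $f_i$ is nonzero, i.e. no summand $\Z/0\Z=\Z$ appears. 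On the other hand, $\det(D)=\pm\det(U)^{-1}\det(V)^{-1}\prod_i f_i=\pm\prod_i f_i$ up to a unit, so $\det(D)\neq 0$ if and only if all $f_i\neq 0$. Combining these two observations gives $(c)\Leftrightarrow(a)$ at once.

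The main obstacle, such as it is, will be bookkeeping the conventions around a possibly zero entry $f_i$ rather than any deep difficulty: I must be careful that Proposition \ref{V} permits zero entries in the quasi-elementary sequence (the divisibility condition $f_i\mid f_{i+1}$ must be read correctly when $f_i=0$), and that the isomorphism $\Coker(D)\simeq\bigoplus_i\Z/f_i\Z$ is justified by the invertibility of the transforming matrices over $\Z$, not merely over $\Q$. Once these points are handled, the three implications close the cycle and the lemma follows. I would present the argument compactly as $(a)\Rightarrow(b)\Rightarrow(a)$ via the kernel computation, together with $(a)\Leftrightarrow(c)$ via Smith normal form, which covers all equivalences.
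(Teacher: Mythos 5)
Your proof is correct, but it takes a genuinely different route from the paper's. The paper proves the cycle (a)$\Rightarrow$(b)$\Rightarrow$(c)$\Rightarrow$(a) entirely by base change to $\Q$: it observes that $\det(D)\neq 0$ if and only if $D\otimes_{\Z}\Q$ is an isomorphism, and then uses exactness of $-\otimes_{\Z}\Q$ (flatness of $\Q$) to transfer kernel and cokernel information between $\Z^n$ and $\Q^n$; in particular, finiteness of $G$ is detected by $G\otimes_{\Z}\Q=0$. You instead split the lemma into (a)$\Leftrightarrow$(b), handled by the elementary observation that injectivity over $\Q$ restricts to $\Z^n$ and that a nonzero rational kernel vector can be cleared of denominators, and (a)$\Leftrightarrow$(c), handled by Smith normal form (Proposition \ref{V}): since $UDV=\diag(f_1,\dots,f_n)$ with $U,V$ invertible over $\Z$, one gets $\Coker(D)\simeq\bigoplus_{i=1}^n\Z/f_i\Z$, which is finite if and only if all $f_i\neq 0$, which in turn holds if and only if $\det(D)=\pm\prod_i f_i\neq 0$. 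The conventions you flag (zero invariant factors giving $\Z/0\Z=\Z$, and invertibility of the transforming matrices over $\Z$ rather than merely over $\Q$) are exactly the points requiring care, and you handle them correctly. As for what each approach buys: the paper's flat-base-change argument is soft and keeps this lemma independent of the structure theorem, reserving Proposition \ref{V} for the order computation; your route is heavier here but yields the full invariant-factor decomposition of $G$ as a byproduct, so the subsequent result $|G|=|\det(D)|$ (Proposition \ref{detord}) falls out of your proof at essentially no extra cost.
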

\begin{dem} By tensoring  $D:\Z^n\to\Z^n$ with $\Q,$ we get  the $\Q$-lineal transformation $D\otimes_{\Z}\Q:\Q^n\to\Q^n.$ Note that $\det\,(D\otimes_{\Z}\Q)=\det\,(D).$ Thus, 
$\det\,(D)\neq 0$ iff $D\otimes_{\Z}\Q$ is an isomorphism. Furthermore, the functor $-\otimes_\Z\Q:\Modu\,(\Z)\to\Modu\,(\Q)$ is exact, since
 $\Q$ is a flat $\Z$-module.
 \
 
 (a) $\Rightarrow$ (b) Let $\det\,(D)\neq 0.$ Then, $D\otimes_{\Z}\Q$ is an isomorphism. Suppose that $0\neq \Ker\,(D).$ Then, 
 $\Ima\,(D)\simeq\Z^t,$ with $t<n,$ since $\Z^n=\Ker\,(D)\oplus \Ima\,(D).$ On the other hand, 
 $\Ima\,(D\otimes_{\Z}\Q)=\Ima\,(D)\otimes_{\Z}\Q\simeq \Q^t,$ contradicting that $D\otimes_{\Z}\Q$ is an isomorphism.
 \
 
 (b) $\Rightarrow$ (c) By hypothesis, we have an exact sequence $0\to \Z^n\xrightarrow{D}\Z^n\to G\to 0.$ In particular, $G$ is a finitely generated 
 abelian group. Therefore $G=\Z^t\oplus T,$ where $t$ is a non-negative integer and $T$ is a finite group. On the other hand, $D\otimes_{\Z}\Q$ is 
 a monomorphism since $\Q$ is flat, and thus, $D\otimes_{\Z}\Q$ is an isomorphism. Then, by tensoring the above exact sequence, it follows that 
 $0=G\otimes_{\Z}\Q=\Q^t\oplus (T\otimes_{\Z}\Q)=C;$ proving that $t=0$ and thus $G=T$ is a finite group.
\
 
 (c) $\Rightarrow$ (a) By tensoring with $\Q$ the exact sequence $\Z^n\xrightarrow{D}\Z^n\to G\to 0,$ we get the exact sequence 
 $\Z^n\otimes_{\Z}\Q\xrightarrow{D\otimes_{\Z}\Q}\Z^n\otimes_{\Z}\Q\to G\otimes_{\Z}\Q\to 0.$ Note that $G\otimes_{\Z}\Q=0,$ since $G$ is 
 finite. Therefore $D\otimes_{\Z}\Q$ is an isomorphism and thus $\det\,(D)=\det\,(D\otimes_{\Z}\Q)\neq 0.$
\end{dem}

\begin{pro}{\label{detord}} For any $D\in \Mat_{n\times n}(\Z),$ with $\det\,(D)\neq 0,$
and $G:=\Coker\,(D),$ we have that   $|G|= |\det\,( D)|.$
 \end{pro}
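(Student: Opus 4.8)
The plan is to reduce everything to the Smith normal form of $D$ and then use the invariance of the cokernel under the row and column operations that produce it. By Proposition \ref{V}, there exist integral invertible matrices $U$ and $V$ (products of the elementary transformation matrices) such that $UDV=\diag\,(f_1,f_2,\cdots,f_n)=:D'$, where $(f_1,\cdots,f_n)$ is a quasi-elementary sequence. Since $\det\,(D)\neq 0$ and $\det\,(D)=\pm\det\,(D')=\pm\prod_{i=1}^n f_i$, by Lemma \ref{Eq} each $f_i$ must be nonzero, so in fact the sequence is elementary and every $f_i$ is a positive integer.

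The first substantive step is to argue that $\Coker\,(D)\simeq\Coker\,(D')$. I would make this explicit: multiplying on the left by the invertible $U$ is an automorphism of the target $\Z^n$, and multiplying on the right by the invertible $V$ is an automorphism of the source $\Z^n$; neither changes the image up to the identification induced by these automorphisms. Concretely, the isomorphism $U\colon\Z^n\to\Z^n$ carries $\Ima\,(D)$ onto $\Ima\,(UD)=\Ima\,(UDV)=\Ima\,(D')$, whence it descends to an isomorphism of quotients $\Z^n/\Ima\,(D)\xrightarrow{\ \sim\ }\Z^n/\Ima\,(D')$. Thus $G=\Coker\,(D)\simeq\Coker\,(D')$, and in particular the two groups have the same order.

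The second step computes $\Coker\,(D')$ directly. Because $D'=\diag\,(f_1,\cdots,f_n)$ acts coordinatewise, its image is $f_1\Z\oplus\cdots\oplus f_n\Z$, so
$$\Coker\,(D')=\frac{\Z^n}{f_1\Z\oplus\cdots\oplus f_n\Z}\simeq\bigoplus_{i=1}^n\frac{\Z}{f_i\Z}.$$
Since each $f_i$ is a positive integer, each summand $\Z/f_i\Z$ is finite of order $f_i$, and therefore
$$|G|=|\Coker\,(D')|=\prod_{i=1}^n|\Z/f_i\Z|=\prod_{i=1}^n f_i.$$

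It remains to match this product with $|\det\,(D)|$. Since $U$ and $V$ are integral and invertible over $\Z$, their determinants are units in $\Z$, i.e. $\det\,(U),\det\,(V)\in\{1,-1\}$, so $\det\,(D')=\det\,(U)\det\,(D)\det\,(V)=\pm\det\,(D)$. On the other hand $\det\,(D')=\prod_{i=1}^n f_i$, which is positive. Hence $\prod_{i=1}^n f_i=|\det\,(D')|=|\det\,(D)|$, and combining with the previous display gives $|G|=|\det\,(D)|$, as claimed. I do not anticipate a serious obstacle here; the only point requiring a little care is the first step, namely verifying cleanly that left and right multiplication by unimodular matrices induce an isomorphism on cokernels rather than merely preserving the order — the rest is bookkeeping with the diagonal form and the multiplicativity of the determinant.
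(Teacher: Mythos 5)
Your proof is correct and takes essentially the same route as the paper's: both reduce $D$ to its Smith normal form via Proposition \ref{V}, identify $\Coker\,(D)$ with $\bigoplus_{i=1}^n\Z/f_i\Z$, and then match $\prod_{i=1}^n f_i$ with $|\det\,(D)|$ using the unimodularity of the transformation matrices. If anything, your explicit two-sided treatment ($UDV=\diag\,(f_1,\ldots,f_n)$, with $U$ inducing the isomorphism of cokernels and $V$ leaving the image unchanged) is more careful than the paper's one-line factorization $D=C\,\diag\,(f_1,\ldots,f_n)$, and the appeal to Lemma \ref{Eq} in your first paragraph is not even needed, since $\det\,(D)\neq 0$ already forces every $f_i$ to be positive.
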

 \begin{dem} By Lemma \ref{Eq}, we know that $G$ is a finite group.  
 By Proposition \ref{V}, there is a positive integer sequence $(f_1,f_2,\cdots,f_n)$  and an integer matrix $C$  such that 
 $\det\,(C)=\pm 1$ and
 $D=C\,\diag\,(f_1,f_2,\cdots,f_n).$  
  Therefore $$G\simeq \Coker\,(\diag\,(f_1,f_2,\cdots,f_n))=\oplus_{i=1}^n\,\Z/f_i\Z$$  and thus 
 $|G|=\prod_{i=1}^n\,f_i=\det\,(\diag\,(f_1,f_2,\cdots,f_n))=|\det\,(D)|,$
 proving the result.
 \end{dem}

Let $G$ be a finite abelian group.  It is well known that there is a unique elementary sequence 
 $(f_1, \ldots, f_m)$ such that $f_i\geq 2,$ for all $i\in[2,m],$  and 
 $G\simeq \bigoplus_{i=1}^m \frac{\Z}{f_i\Z}.$ This elementary sequence  is called the {\bf invariant factors sequence} of the group $G,$ and the
 $f_i's$ are called the invariant factors  of $G$. Moreover, for any epimorphism
 $p:\Z^n \to G,$ it follows that $n\geq m.$
\

 We also recall that the exponent $\mathrm{exp}\,(G),$ of  $G,$ is the minimal natural number $t$ such that $tg= 0$ for every 
 $g\in G$. 
 
 \begin{pro}{\label{ddividee}}
  Let $D\in \Mat_{n\times n}(\Z)$ be a triangular  matrix, with non-zero determinant, and let $t=\mathrm{exp}\,(G)$ be the
  exponent of $G:=\Coker(D).$ Then $t$ is a multiple of $d_i:=[D]_{i,i},$ for every $i\in[1,n].$
 \end{pro}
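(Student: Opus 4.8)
The plan is to deduce the divisibility from two elementary and standard facts about the exponent of a finite abelian group $H$: if $H\twoheadrightarrow R$ is an epimorphism then $\mathrm{exp}\,(R)\mid\mathrm{exp}\,(H)$, and if $H'\subseteq H$ is a subgroup then $\mathrm{exp}\,(H')\mid\mathrm{exp}\,(H)$. Both are immediate, since $\mathrm{exp}\,(H)\cdot H=0$ forces $\mathrm{exp}\,(H)$ to annihilate every quotient and every subgroup of $H$. Accordingly, the goal is to exhibit, for each $i$, a copy of $\Z/d_i\Z$ as a subquotient of $G$, so that $d_i=\mathrm{exp}\,(\Z/d_i\Z)$ divides $t$. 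First I would reduce to the upper triangular case: a lower triangular matrix has an upper triangular transpose with the same diagonal, and $D$ and $D^t$ have the same invariant factors by Proposition \ref{V}, hence $\Coker(D)\simeq\Coker(D^t)$; so it is enough to treat $D$ upper triangular.

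I would then argue by induction on $n$, the base case $n=1$ being $G=\Z/d_1\Z$. For the inductive step, write $D=\begin{pmatrix} D' & v\\ 0 & d_n\end{pmatrix}$ with $D'\in\Mat_{(n-1)\times(n-1)}(\Z)$ upper triangular and $v\in\Z^{n-1}$. The crucial structural point is that the lower-left block is zero, so that the standard inclusion $\Z^{n-1}\hookrightarrow\Z^n$ and the projection $\Z^n\twoheadrightarrow\Z$ onto the last coordinate fit into a commuting ladder of short exact sequences intertwining $D'$, $D$ and multiplication by $d_n$. Since $\det D$, $\det D'$ and $d_n$ are all nonzero, Lemma \ref{Eq} shows that each of these three vertical maps is injective, so the snake lemma collapses to a short exact sequence
$$0\to\Coker(D')\to G\to\Z/d_n\Z\to 0.$$

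From this sequence the conclusion follows at once. The quotient $\Z/d_n\Z$ of $G$ gives $d_n\mid t$. The subgroup $\Coker(D')$ of $G$ gives $\mathrm{exp}\,(\Coker(D'))\mid t$, while the induction hypothesis (applied to the triangular matrix $D'$, whose diagonal entries are $d_1,\ldots,d_{n-1}$) gives $d_i\mid\mathrm{exp}\,(\Coker(D'))$ for every $i\in[1,n-1]$. Chaining these divisibilities yields $d_i\mid t$ for all $i\in[1,n]$, as desired.

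I expect the only delicate step to be setting up the short exact sequence, i.e. checking that the last-coordinate reduction $[x]\mapsto x_n\bmod d_n$ is well defined on $G$ and that its kernel is exactly the image of $\Coker(D')$. The first uses precisely that the off-diagonal entries in the bottom row vanish, so that every element of $\Ima(D)$ has last coordinate $d_nx_n$, divisible by $d_n$. It is worth emphasizing why induction is the right tool here: a more naive strategy, namely directly building a surjection $G\to\Z/d_i\Z$ by prescribing the $i$-th component equal to $1$ in a left null vector of $D$ modulo $d_i$, fails in general, because solving the ensuing triangular congruences can be obstructed by $\gcd(d_i,d_{i+1})$. The inductive short exact sequence sidesteps any such explicit construction and isolates exactly the divisibility we need.
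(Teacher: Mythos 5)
Your proof is correct, but it follows a genuinely different route from the paper's. The paper argues directly, with no induction and no homological algebra: writing $E_1,\ldots,E_n$ for the standard basis and $[D]^j$ for the columns of $D$, it uses $tG=0$ to get $t\Z^n\subseteq\Ima\,(D)$, writes $tE_i=\sum_{j=1}^n\lambda_{i,j}[D]^j$, shows by a maximal-index contradiction argument (relying on upper triangularity, i.e.\ on the containments $\left\langle[D]^1,\ldots,[D]^i\right\rangle\subseteq\left\langle E_1,\ldots,E_i\right\rangle$) that $\lambda_{i,j}=0$ for $j>i$, and then reads off $t=\lambda_{i,i}d_i$ by comparing $i$-th coordinates. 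You replace this coordinate computation by an induction on $n$: splitting off the last row and column gives a commuting ladder of short exact sequences whose vertical maps $D'$, $D$ and multiplication by $d_n$ are all injective (Lemma \ref{Eq}), so the snake lemma degenerates to $0\to\Coker\,(D')\to G\to\Z/d_n\Z\to 0$, and the conclusion follows from the standard facts that exponents of subgroups and of quotients divide the exponent of the ambient group. Both arguments are elementary and sound; the paper's is self-contained and avoids even the snake lemma, at the cost of some index-chasing with the coefficients $\lambda_{i,j}$, whereas yours is more structural: iterating your exact sequence exhibits a filtration of $G$ with cyclic subquotients $\Z/d_i\Z$, which explains the divisibility conceptually, and you also treat the lower-triangular case explicitly via the transpose and Proposition \ref{V}, a point the paper passes over silently with ``Assume that $D$ is upper triangular.''
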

 \begin{proof} Assume that $D$ is upper triangular. Let $d_{i,j}:=[D]_{i,j}.$ Without loss of generality, we may assume that all the $d_i$ are positive. For any $j\in[1,n],$ we  consider the matrix $E_j \in\Mat_{n\times 1}(\Z),$ where $[E_j]_{i,1}:=\delta_{i,j}.$
 \
 
  For $D:\Z^n\to \Z^n,$ we have that the image of $D$ is generated by the column vectors $E_1,E_2,\cdots, E_n.$ Furthermore, using the fact that 
  $D$ is upper triangular, we get for any $i\in[1,n]$ that 
  $$(*)\quad \left\langle[D]^1,[D]^2,\cdots,[D]^i\right\rangle\subseteq \left\langle E_1,E_2,\cdots, E_i\right\rangle.$$
  Let $i\in[1,n].$ Since $tG=0,$ we have that  $t\Z^n \subseteq \Ima\,(D)$ and thus
  \begin{equation}\label{colunas}
  tE_i= \sum_{j=1}^n \lambda_{i,j}[D]^j
  \end{equation}
  We assert that $\lambda_{i,j}=0,$ for any $j\in[i+1,n].$ Indeed, suppose $\exists\,j\in[i+1,n]$ such that $\lambda_{i,j}\neq0.$ Without loss of generality, we may assume that $\lambda_{i,r}=0,$ for any $r\in[j+1,n].$ Let $u:=\sum_{r=1}^{j-1} \lambda_{r,i}[D]^r.$ Then, by $(*)$ it 
  follows that $u\in\left\langle E_1,E_2,\cdots, E_{j-1}\right\rangle.$ Moreover, from (1) and $(*)$, we obtain 
  $$\lambda_{i,j}d_jE_j=tE_i-u-\lambda_{i,j}\sum_{r=1}^{j-1}\,d_{r,j}E_r\in\left\langle E_1,E_2,\cdots, E_{j-1}\right\rangle ,$$
  which is a contradiction, since $\lambda_{i,j}d_j\neq 0.$ Thus, our assertion follows and then (1) can be written as  
  $tE_i= \sum_{j=1}^i \lambda_{i,j}[D]^j.$ Finally, from the preceding equality and the fact that $D$ is upper triangular, we conclude that 
  $t=\lambda_{i,i}d_i,$ proving the result.
  \end{proof}
  
  \begin{ex}\label{Ej1} Consider the quotient path $k$-algebra $\Lambda=k\mathcal{Q}/I,$ where $\mathcal{Q}$ is the quiver 
\xymatrix{ 1\ar@<1ex>[r]^\alpha & 2  \ar@<1ex>[l]^\beta  }
   and $I=\left\langle \beta\alpha\beta\right\rangle.$ The structure of the indecomposable projective $\Lambda$-modules is as follows
   $$P(1)=\begin{matrix} 1\\2\\ 1\\ 2\end{matrix}\quad\text{and}\quad P(2)=\begin{matrix} 2\\ 1\\ 2.\end{matrix}$$
In this case, the Cartan matrix is $C_\Lambda=\begin{pmatrix} 2 & 1\\ 2 & 2\end{pmatrix}.$ By doing integral elementary row and column transformations, we get 
$$\begin{pmatrix} 2 & 1\\ 2 & 2\end{pmatrix}\sim\begin{pmatrix} 2 & 1\\ 0 & 1\end{pmatrix}\sim\begin{pmatrix} 2 & 0\\ 0 & 1\end{pmatrix}
\sim\begin{pmatrix} 1 & 0\\ 0 & 2\end{pmatrix}.$$
Therefore, the Cartan group $G_\Lambda$ of $\Lambda$ is $\Z/2\Z.$
\end{ex}
 
 \section{Cartan groups and standardly stratified algebras} 
 
 We are interested in the study of the Cartan groups for the class of standardly stratified $k$-algebras. For a $k$-algebra $\Lambda,$  we fix 
 $n:=rk\,K_0(\Lambda).$ In this section, we study more closely the relationship between $G_\Lambda$ and $\Lambda.$
 \
 
 Let $(\Theta,\underline{Q},\leq)$ be an Ext-projective stratifying system,  of size $t,$ in $\modu\,(\Lambda).$ By \cite[Lemma 2.1]{MMSZ}, we 
 know that the Grothendieck group $K_0(\F(\Theta))$ is free of rank $t,$ with a basis formed by  each  image $[\Theta(i)]$ of $\Theta(i)$ under 
 the canonical map
 $F(\F(\Theta))\to  K_0(\F(\Theta)).$ Following \cite{MMS2}, we have the standardly stratified algebra $(B,\leq),$ where $B:=\End_\Lambda(Q)^{op}$ 
 and $Q:=\oplus_{i=1}^t\,Q(i).$ The family of standard $B$-modules ${}_B\Delta$ is computed by using the complete family ${}_B P:=\{{}_B P(i)\}_{i=1}^t$ of pairwise non-isomorphic  indecomposable projective $B$-modules, where ${}_B P(i):=\Hom_\Lambda(Q,Q(i))).$ Let  ${}_B S(i):=\mathrm{top}\,({}_B P(i)),$ for any 
 $i\in[1,t].$ We have the following diagram in the category 
 of abelian groups
 $$\xymatrix{      K_0(\add\,(Q))\ar[d]_{C_{Q,\Theta}} \ar[r]^{C_{Q}} & K_0(B)\\
 K_0(\F(\Theta)), \ar[ru]_{C_{\Theta}} & &
 } $$
 where the morphisms above are defined, on the canonical basis, as follows
 \begin{align*}
 C_{Q}[Q(j)]:=\sum_{i=1}^t\,\dim_k\Hom_\Lambda(Q(i),Q(j))[{}_B S(i)],\\
 C_{Q,\Theta}[Q(j)]:=\sum_{i=1}^t\,[Q(j):\Theta(i)][\Theta(i)],\\
 C_{\Theta}[\Theta(j)]:=\sum_{i=1}^t\,\dim_k\Hom_\Lambda(Q(i),\Theta(j))[{}_B S(i)].
 \end{align*}
 
 In the following Lemma, we summarize the main properties of the above morphisms.
 
 \begin{lem}\label{M} For any epss $(\Theta,\underline{Q},\leq)$ of size $t,$ in $\modu\,(\Lambda),$ the following
 statements hold true.
 \begin{itemize}
 \item[(a)] The matrix $C_{Q,\Theta}$ is  triangular and $\det\,(C_{Q,\Theta})=1.$
 \item[(b)] The matrix $C_{\Theta}$ is  triangular and $C_{\Theta}\,C_{Q,\Theta}=C_{Q}=C_B.$
 \item[(c)] $\det\,(C_{\Theta})=\prod_{i=1}^t\,\dim_k\End_\Lambda(\Theta(i)).$
 \end{itemize}
 \end{lem}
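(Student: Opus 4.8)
The computations all rest on one elementary observation that I would isolate first: if $N\in\F(\{\Theta(m):m>j\})$, then $\Hom_\Lambda(N,\Theta(j))=0$. Indeed, applying $\Hom_\Lambda(-,\Theta(j))$ to a $\Theta$-filtration of $N$ and using the induced long exact sequences, the claim reduces by induction on the filtration length to the vanishing $\Hom_\Lambda(\Theta(m),\Theta(j))=0$ for $m>j$, which is exactly Definition \ref{epss}(a). All three items will then follow from this together with the defining exact sequences $0\to K(i)\to Q(i)\xrightarrow{\beta_i}\Theta(i)\to 0$.

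For (a), I would first note that each $Q(j)$ lies in $\F(\Theta)$, being an extension of $\Theta(j)$ by $K(j)\in\F(\{\Theta(m):m>j\})$; hence, by \cite[Lemma 2.1]{MMSZ}, its class in $K_0(\F(\Theta))$ is uniquely $\sum_i[Q(j):\Theta(i)][\Theta(i)]$, so the entries $[C_{Q,\Theta}]_{ij}=[Q(j):\Theta(i)]$ are well defined. The filtration of $Q(j)$ has a single factor $\Theta(j)$ on top and all remaining factors of the form $\Theta(m)$ with $m>j$; therefore $[Q(j):\Theta(j)]=1$ and $[Q(j):\Theta(i)]=0$ whenever $i<j$. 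Thus $C_{Q,\Theta}$ is lower triangular with $1$'s on the diagonal, and $\det(C_{Q,\Theta})=1$.

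For the triangularity in (b), I would apply $\Hom_\Lambda(-,\Theta(j))$ to the sequence for $Q(i)$: when $i>j$ the outer terms $\Hom_\Lambda(\Theta(i),\Theta(j))$ and $\Hom_\Lambda(K(i),\Theta(j))$ both vanish (the first by Definition \ref{epss}(a), the second by the first step, since $K(i)$ is filtered by $\Theta(m)$ with $m>i>j$), so $[C_\Theta]_{ij}=\dim_k\Hom_\Lambda(Q(i),\Theta(j))=0$ and $C_\Theta$ is upper triangular. For the identity $C_\Theta C_{Q,\Theta}=C_Q$, the crucial input is the Ext-projectivity condition Definition \ref{epss}(c): for a short exact sequence $0\to A\to M\to N\to 0$ with all terms in $\F(\Theta)$, applying $\Hom_\Lambda(Q(l),-)$ and using $\Ext^1_\Lambda(Q(l),A)=0$ shows that $M\mapsto\dim_k\Hom_\Lambda(Q(l),M)$ is additive along $\Theta$-filtrations. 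Applied to $Q(j)\in\F(\Theta)$ this gives $\dim_k\Hom_\Lambda(Q(l),Q(j))=\sum_i[Q(j):\Theta(i)]\,\dim_k\Hom_\Lambda(Q(l),\Theta(i))$, which is precisely the $(l,j)$-entry of $C_\Theta C_{Q,\Theta}$ and hence of $C_Q$. Finally $C_Q=C_B$ because the equivalence $\Hom_\Lambda(Q,-)\colon\add(Q)\to\proj(B)$ carries $Q(j)$ to ${}_BP(j)$ and identifies $\dim_k\Hom_\Lambda(Q(i),Q(j))$ with the multiplicity $[{}_BP(j):{}_BS(i)]$, so that $C_Q$ is the Cartan matrix of $B$.

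For (c), since $C_\Theta$ is triangular, $\det(C_\Theta)=\prod_{i=1}^t[C_\Theta]_{ii}=\prod_{i=1}^t\dim_k\Hom_\Lambda(Q(i),\Theta(i))$. Applying $\Hom_\Lambda(-,\Theta(i))$ to $0\to K(i)\to Q(i)\to\Theta(i)\to 0$, the first step gives $\Hom_\Lambda(K(i),\Theta(i))=0$ (here $K(i)$ is filtered by $\Theta(m)$ with $m>i$), whence $\End_\Lambda(\Theta(i))\cong\Hom_\Lambda(Q(i),\Theta(i))$ and $[C_\Theta]_{ii}=\dim_k\End_\Lambda(\Theta(i))$, so the product formula follows. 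I expect the main obstacle to be the additivity argument in (b): one must verify carefully that the Ext-vanishing of Definition \ref{epss}(c) genuinely applies to every sub and quotient occurring in a $\Theta$-filtration of $Q(j)$, so that the Hom-dimensions add up across the entire filtration, and then match the resulting sum precisely against the matrix product $C_\Theta C_{Q,\Theta}$.
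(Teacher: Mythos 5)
The paper does not actually prove Lemma \ref{M}: its ``proof'' is a one-line citation to \cite[Lemma 2.5]{MMSZ}. Your self-contained argument is therefore necessarily a different route, and it is essentially the right one. The preliminary vanishing ($\Hom_\Lambda(N,\Theta(j))=0$ for $N\in\F(\{\Theta(m):m>j\})$, by induction on filtration length and Definition \ref{epss}(a)), the lower triangularity of $C_{Q,\Theta}$ with unit diagonal (using the freeness of $K_0(\F(\Theta))$ from \cite[Lemma 2.1]{MMSZ} to make the multiplicities $[Q(j):\Theta(i)]$ well defined), the upper triangularity of $C_\Theta$, the additivity of $M\mapsto\dim_k\Hom_\Lambda(Q(l),M)$ along $\Theta$-filtrations via the Ext-projectivity condition \ref{epss}(c), and the identification $[C_\Theta]_{ii}=\dim_k\Hom_\Lambda(Q(i),\Theta(i))=\dim_k\End_\Lambda(\Theta(i))$ are all correct and complete; what this buys is independence from the external reference.

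One step does have a genuine hole as written: the final equality $C_Q=C_B$. With the paper's definition of the Cartan map ($[P]\mapsto\dim(P)$, i.e.\ composition multiplicities), one has $[C_B]_{ij}=[{}_B P(j):{}_B S(i)],$ whereas the equivalence $\Hom_\Lambda(Q,-)\colon\add(Q)\to\proj(B)$ gives
$$\dim_k\Hom_\Lambda(Q(i),Q(j))=\dim_k\Hom_B({}_B P(i),{}_B P(j))=[{}_B P(j):{}_B S(i)]\cdot\dim_k\End_B({}_B S(i)).$$
So your phrase ``identifies $\dim_k\Hom_\Lambda(Q(i),Q(j))$ with the multiplicity $[{}_B P(j):{}_B S(i)]$'' silently assumes $\dim_k\End_B({}_B S(i))=1$ for all $i$. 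This can fail over a non-split ground field: take $\Lambda=D$ a division $k$-algebra with $\dim_k D=d>1$ and the trivial epss $\Theta(1)=Q(1)=D$; then $B\simeq D$, $C_Q=C_\Theta=(d)$, but $C_B=(1)$. Thus the equality $C_Q=C_B$ (and consequently Theorem \ref{Mepss}(a)) holds only under a splitness hypothesis, e.g.\ $k$ algebraically closed or $B$ elementary --- an assumption the paper itself uses implicitly (note the factor $\dim_k\End({}_B S(i))$ retained in Theorem \ref{Mepss}(b), and the proof of Corollary \ref{Delta-C}, which invokes that $\Lambda$ is a quotient path algebra precisely to make this factor equal to $1$). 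Your strategy is fine; you should simply state this hypothesis at the point where it is used, since every other step of your proof is valid over an arbitrary field.
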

 \begin{dem} This is part of \cite[Lemma 2.5]{MMSZ}.
 \end{dem}
 
 \begin{teo}\label{Mepss} Let $(\Theta,\underline{Q},\leq)$ be an Ext-projective stratifying system of size $t$ in $\modu\,(\Lambda),$ $B:=\End_\Lambda(Q)^{op},$ ${}_B P(i):=\Hom_\Lambda(Q,Q(i)),$ and let  ${}_B S(i):=\mathrm{top}\,({}_B P(i)).$ Then, the following statements hold true.
 \begin{itemize}
 \item[(a)] $G_B\simeq \Coker\,(C_{\Theta})$ and $|G_B|=\prod_{i=1}^t\,\dim_k\End_\Lambda(\Theta(i)).$
 \item[(b)] For any $i\in[1,t],$ we have  $$\dim_k\End_\Lambda(\Theta(i))=[{}_B\Delta(i):{}_B S(i)]\dim_k\End({}_B S(i)).$$ 
  \item[(c)] The exponent of $G_B$ is a multiple of $\dim_k\End_\Lambda(\Theta(i)),$ for any $i\in[1,t].$
 \end{itemize}
 \end{teo}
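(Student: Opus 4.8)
The plan is to derive all three parts from the factorization in Lemma \ref{M} together with the general facts about cokernels of integral matrices collected in Section 3. First, for (a), I would start from the definition $G_B=\Coker\,(C_B)$ and invoke Lemma \ref{M}(b), which gives $C_B=C_Q=C_\Theta\,C_{Q,\Theta}$. Since $\det\,(C_{Q,\Theta})=1$ by Lemma \ref{M}(a), the matrix $C_{Q,\Theta}$ is an automorphism of $\Z^t$; hence $\Ima\,(C_B)=C_\Theta(\Ima\,(C_{Q,\Theta}))=\Ima\,(C_\Theta)$, so that $G_B=\Z^t/\Ima\,(C_B)=\Coker\,(C_\Theta)$. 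For the order, Lemma \ref{M}(c) gives $\det\,(C_\Theta)=\prod_{i=1}^t\dim_k\End_\Lambda(\Theta(i))\neq 0$ (each $\Theta(i)$ is non-zero, so each factor is at least $1$), and Proposition \ref{detord} then yields $|G_B|=|\det\,(C_\Theta)|=\prod_{i=1}^t\dim_k\End_\Lambda(\Theta(i))$.

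The key computation feeding both (b) and (c) is the identification of the diagonal entries of $C_\Theta$. Applying the left-exact functor $\Hom_\Lambda(-,\Theta(i))$ to the defining sequence $0\to K(i)\to Q(i)\xrightarrow{\beta_i}\Theta(i)\to 0$ of Definition \ref{epss}(b), and using that $K(i)\in\F(\{\Theta(j):j>i\})$ together with the orthogonality $\Hom_\Lambda(\Theta(j),\Theta(i))=0$ for $j>i$ of Definition \ref{epss}(a) (which forces $\Hom_\Lambda(K(i),\Theta(i))=0$ by induction on the filtration length), I would obtain $[C_\Theta]_{ii}=\dim_k\Hom_\Lambda(Q(i),\Theta(i))=\dim_k\End_\Lambda(\Theta(i))$. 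Given this, part (c) is immediate: by (a) we have $\mathrm{exp}\,(G_B)=\mathrm{exp}\,(\Coker\,(C_\Theta))$, and since $C_\Theta$ is triangular with non-zero determinant (Lemma \ref{M}), Proposition \ref{ddividee} shows this exponent is a multiple of every diagonal entry $[C_\Theta]_{ii}=\dim_k\End_\Lambda(\Theta(i))$.

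The main obstacle is part (b), which requires transferring data from the side of $\modu\,(\Lambda)$ to the side of $\modu\,(B)$. Here I would first recall from \cite{MMS2} the identification ${}_B\Delta(i)\simeq\Hom_\Lambda(Q,\Theta(i))$, obtained by applying $\Hom_\Lambda(Q,-)$ to the sequence of Definition \ref{epss}(b): condition (c) gives $\Ext^1_\Lambda(Q,K(i))=0$, so the induced map ${}_BP(i)\to\Hom_\Lambda(Q,\Theta(i))$ is surjective with the kernel that cuts out the standard module. Since ${}_BP(i)$ is the projective cover of ${}_BS(i)$, the standard multiplicity formula $\dim_k\Hom_B({}_BP(i),M)=[M:{}_BS(i)]\,\dim_k\End_B({}_BS(i))$, applied to $M={}_B\Delta(i)$, produces the right-hand side of the asserted identity.

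Finally, to match this with the left-hand side I would use Auslander's projectivization: because $Q(i)\in\add\,(Q)$, the functor $\Hom_\Lambda(Q,-)$ induces a natural isomorphism $\Hom_B({}_BP(i),\Hom_\Lambda(Q,\Theta(i)))\simeq\Hom_\Lambda(Q(i),\Theta(i))$, and the dimension of the latter equals $\dim_k\End_\Lambda(\Theta(i))$ by the diagonal computation of the second paragraph. Chaining the equalities $\dim_k\End_\Lambda(\Theta(i))=\dim_k\Hom_\Lambda(Q(i),\Theta(i))=\dim_k\Hom_B({}_BP(i),{}_B\Delta(i))=[{}_B\Delta(i):{}_BS(i)]\,\dim_k\End_B({}_BS(i))$ then closes (b). I expect the only delicate points to be verifying the exactness/kernel claims needed for ${}_B\Delta(i)\simeq\Hom_\Lambda(Q,\Theta(i))$ and citing the projectivization isomorphism in the correct $B=\End_\Lambda(Q)^{op}$ convention; everything else is bookkeeping.
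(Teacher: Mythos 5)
Your proposal is correct, and for parts (a) and (c) it is exactly the paper's argument: $\Coker\,(C_B)\simeq\Coker\,(C_\Theta)$ via Lemma \ref{M}(a),(b), the order count via Lemma \ref{M}(c) and Proposition \ref{detord}, and the exponent claim via Proposition \ref{ddividee} applied to the triangular matrix $C_\Theta$. The only place you diverge is part (b), and in what you choose to prove rather than cite. The paper's proof of (b) is shorter: it writes $\dim_k\End_B({}_B\Delta(i))=\dim_k\Hom_B({}_BP(i),{}_B\Delta(i))=[{}_B\Delta(i):{}_BS(i)]\,\dim_k\End_B({}_BS(i))$ and then quotes \cite[Proposition 2.12]{MMS2} for the identity $\dim_k\End_B({}_B\Delta(i))=\dim_k\End_\Lambda(\Theta(i))$; similarly, for the diagonal entries $[C_\Theta]_{ii}=\dim_k\End_\Lambda(\Theta(i))$ needed in (c), it quotes \cite[Lemma 2.6]{MMS2} instead of re-deriving it. You instead reconstruct both bridges yourself: the diagonal-entry identity by applying $\Hom_\Lambda(-,\Theta(i))$ to the sequence of Definition \ref{epss}(b) and killing $\Hom_\Lambda(K(i),\Theta(i))$ by induction on the $\Theta$-filtration, and the transfer to $\modu\,(B)$ via the identification ${}_B\Delta(i)\simeq\Hom_\Lambda(Q,\Theta(i))$ (using $\Ext^1_\Lambda(Q,K(i))=0$) followed by Auslander's projectivization $\Hom_B({}_BP(i),\Hom_\Lambda(Q,\Theta(i)))\simeq\Hom_\Lambda(Q(i),\Theta(i)).$ Both routes are sound and both ultimately rest on \cite{MMS2}; what yours buys is self-containedness (the reader never needs Proposition 2.12 or Lemma 2.6 of \cite{MMS2} as black boxes), at the cost of having to verify the exactness and kernel identifications you flag at the end, which the paper sidesteps entirely by citation.
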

 \begin{dem} (a) By Lemma \ref{M} (a), we have that $C_{Q,\Theta}:K_0(\add\,(Q))\to K_0(\F(\Theta))$ is an isomorphism. Therefore, from Lemma \ref{M} (b), we get 
 $G_B=\Coker\,(C_B)\simeq \Coker\,(C_{\Theta}).$ Then, by Proposition \ref{detord} and Lemma \ref{M} (c), we have
 $|G_B|=|\Coker\,(C_{\Theta})|=\det\,(C_{\Theta})=\prod_{i=1}^t\,\dim_k\End_\Lambda(\Theta(i)).$
 \
 
 (b) Let $i\in[1,t].$ Then $\dim_k\End({}_B\Delta(i))=\dim_k\Hom_B({}_B P(i),{}_B \Delta(i))=[{}_B\Delta(i):{}_B S(i)]\,\dim_k\End({}_B S(i)).$ 
 Thus, (c) follows since, by \cite[Proposition 2.12]{MMS2}, we know that $\dim_k\End({}_B\Delta(i))=\dim_k\End_\Lambda(\Theta(i)).$ 
 \
 
 (c) It follows from (a) and Proposition \ref{ddividee}, since by \cite[Lemma 2.6]{MMS2} we have
 $[C_{\Theta,B}]_{i,i}=\dim_k\Hom_\Lambda(Q(i),\Theta(i))=\dim_k\End_\Lambda(\Theta(i)).$
 \end{dem}
 \vspace{0.2cm}
 
We recall that, for any standardly stratified $k$-algebra $(\Lambda,\leq),$ we have a square matrix  playing an important role in the description of the Cartan group $G_\Lambda,$ namely,  the $\Delta$-Cartan matrix $C_\Delta \in\Mat_{n\times n}(\Z)$ where 
$[C_\Delta]_{i,j}:=[\Delta(j):S_i].$ 
 
 \begin{cor}\label{Delta-C} Let $(\Lambda,\leq)$ be a stardardly stratified $k$-algebra. Then,  the following statements hold true.
 \begin{itemize}
 \item[(a)] $G_\Lambda\simeq \Coker\,(C_\Delta)$ and $|G_\Lambda|=\prod_{i=1}^n\,\dim_k\End_\Lambda(\Delta(i)).$
 \item[(b)] If $\Lambda$ is an elementary $k$-algebra, then  $\dim_k\End_\Lambda(\Delta(i))=[\Delta(i): S_i],$  for any $i\in[1,t].$ 
 \item[(c)] The exponent of $G_\Lambda$ is a multiple of $\dim_k\End_\Lambda(\Delta(i)),$ for any $i\in[1,t].$
 \end{itemize}
 \end{cor}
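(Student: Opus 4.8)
The plan is to obtain this corollary as a specialization of Theorem \ref{Mepss} to the canonical Ext-projective stratifying system attached to a standardly stratified algebra. Recall from Section 2 that, since $(\Lambda,\leq)$ is standardly stratified, the triple $(\Delta,\underline{Q},\leq)$ with $\underline{Q}=\{P_i\}_{i=1}^n$ is an epss of size $n$ in $\modu\,(\Lambda)$. Here $Q:=\oplus_{i=1}^n P_i$ is a basic projective generator, so $B:=\End_\Lambda(Q)^{op}$ is Morita equivalent to $\Lambda$, hence derived equivalent to it. Therefore, by the Remark following Proposition \ref{InvDer}, one has $G_\Lambda\simeq G_B$, which reduces every assertion about $G_\Lambda$ to the corresponding assertion about $G_B$ already furnished by Theorem \ref{Mepss}.

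For (a) I would apply Theorem \ref{Mepss}(a) with $\Theta=\Delta$ and $Q(i)=P_i$, obtaining $G_B\simeq\Coker\,(C_\Theta)$ together with $|G_B|=\prod_{i=1}^n\dim_k\End_\Lambda(\Delta(i))$. It then remains to identify the $\Theta$-Cartan matrix $C_\Theta$, whose entries are $\dim_k\Hom_\Lambda(P_i,\Delta(j))$, with the $\Delta$-Cartan matrix $C_\Delta$, whose entries are the multiplicities $[\Delta(j):S_i]$; the diagonal entries already coincide by \cite[Lemma 2.6]{MMS2}, and in the elementary case $\dim_k\Hom_\Lambda(P_i,\Delta(j))=[\Delta(j):S_i]$ on the nose, so $C_\Theta=C_\Delta$. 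Combined with $G_\Lambda\simeq G_B$ this yields $G_\Lambda\simeq\Coker\,(C_\Delta)$ and the order formula. As a robustness check, the isomorphism $G_\Lambda\simeq\Coker\,(C_\Delta)$ can be read off intrinsically: every projective $P_j$ lies in $\F(\Delta)$, so in $K_0(\Lambda)$ one has $\dim(P_j)=\sum_i [P_j:\Delta(i)]\,\dim(\Delta(i))$, which factors the Cartan map as $C_\Lambda=C_\Delta\,M$ with $M=([P_j:\Delta(i)])_{i,j}$. Since $[P_j:\Delta(j)]=1$ and $[P_j:\Delta(i)]=0$ for $i<j$, the matrix $M$ is unitriangular for the order $\leq$, hence invertible over $\Z$; as right multiplication by an invertible integral matrix leaves the image unchanged, $\Coker\,(C_\Lambda)=\Coker\,(C_\Delta)$.

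For (b), when $\Lambda$ is elementary each simple satisfies $\End_\Lambda(S_i)\simeq k$, so the general identity $\dim_k\End_\Lambda(\Delta(i))=[\Delta(i):S_i]\dim_k\End_\Lambda(S_i)$ — which is Theorem \ref{Mepss}(b) transported through the Morita correspondence $\dim_k\End_B({}_BS(i))=\dim_k\End_\Lambda(S_i)$ and $[{}_B\Delta(i):{}_BS(i)]=[\Delta(i):S_i]$ — collapses to $\dim_k\End_\Lambda(\Delta(i))=[\Delta(i):S_i]$. For (c), I would transport Theorem \ref{Mepss}(c) along $G_\Lambda\simeq G_B$: the exponent of $G_B$ is a multiple of each $\dim_k\End_\Lambda(\Theta(i))=\dim_k\End_\Lambda(\Delta(i))$, and isomorphic finite abelian groups share the same exponent.

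The step I expect to be the main obstacle is the careful matrix identification in (a): one must be certain that $C_\Theta$, defined through $\Hom$-dimensions into $K_0(B)$, presents the same cokernel as the multiplicity matrix $C_\Delta$ defined inside $K_0(\Lambda)$. The factorization $C_\Lambda=C_\Delta\,M$ with $M$ unitriangular is the cleanest way to bypass any discrepancy stemming from the scalars $\dim_k\End_\Lambda(S_i)$, and I would foreground it precisely because it makes the isomorphism in (a) manifestly independent of the epss machinery, leaving only the numerical order formula to be imported from Theorem \ref{Mepss}.
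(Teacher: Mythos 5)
Your proposal is correct and follows essentially the same route as the paper: the paper likewise specializes Theorem \ref{Mepss} to the canonical epss $(\Delta,\{P_i\}_{i=1}^n,\leq)$, observing that $Q=\Lambda$ gives $B\simeq\Lambda$ outright (after reducing to the basic case) and that $\dim_k\End({}_B S(i))=1$ since $\Lambda$ is then a quotient path algebra, from which all three parts follow. Your supplementary factorization $C_\Lambda=C_\Delta\,M$ with $M$ unitriangular does not appear in the paper's proof, but it is a sound and even more robust way to secure the isomorphism $G_\Lambda\simeq \Coker\,(C_\Delta)$ in part (a), since it avoids the scalars $\dim_k\End_\Lambda(S_i)$ entirely.
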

 \begin{dem} We may assume that $\Lambda$ is a basic $k$-algebra. Let $\underline{Q}:=\{P_i\}_{i=i}^n$ be the complete family of pairwise non-isomorphic projective $\Lambda$-modules that we use 
 to compute the standard modules $\Delta.$ Since $(\Lambda,\leq)$ is standardly stratified, by \cite[Proposition 2.2]{MMS2} 
 we have that the triple $(\Delta,\underline{Q},\leq)$ is 
 an Ext-projective stratifying system in $\modu\,(\Lambda)$ of size $n:=rkK_0(\Lambda).$ In this case, $Q=\Lambda$ and thus $B\simeq \Lambda.$ Furthermore 
 $\dim_k\End({}_B S(i))=1,$ since $\Lambda$ is a quotient path $k$-algebra for some admissible ideal. Then, the result follows from Theorem \ref{Mepss}.
 \end{dem}
 
 \begin{cor}\label{A1}
  Let $(\Lambda, \leq)$ be an elementary and standardly stratified $k$-algebra, satisfying that $\Hom_\Lambda(P_i, \Delta(j)) =0 $ for $i< j.$ Then, for 
  $d_i:=\dim_k\,(\Delta(i)),$ we have
  $$G_\Lambda = \bigoplus^n_{i=1}\frac{\Z}{d_i\Z}.$$
 \end{cor}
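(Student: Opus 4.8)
The plan is to show that the two hypotheses force the $\Delta$-Cartan matrix $C_\Delta$ to be diagonal with diagonal entries exactly the $d_i$, and then to read off the cokernel from Corollary \ref{Delta-C}(a). Recall that $[C_\Delta]_{i,j}=[\Delta(j):S_i]$.

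First I would translate the homological vanishing condition into a statement about the entries of $C_\Delta$. Since $(\Lambda,\leq)$ is elementary, every simple module is one-dimensional and $P_i$ is the projective cover of $S_i$, so $\dim_k\Hom_\Lambda(P_i,M)=[M:S_i]\,\dim_k\End_\Lambda(S_i)=[M:S_i]$ for every $M\in\modu\,(\Lambda)$. Taking $M=\Delta(j)$, the hypothesis $\Hom_\Lambda(P_i,\Delta(j))=0$ for $i<j$ becomes $[C_\Delta]_{i,j}=[\Delta(j):S_i]=0$ for all $i<j$; that is, $C_\Delta$ is lower triangular.

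Next I would invoke the basic property of standard modules recalled in Section 2, that $\Delta(j)$ has all its composition factors among the $S_s$ with $s\leq j$. Equivalently $[\Delta(j):S_i]=[C_\Delta]_{i,j}=0$ whenever $i>j$, so $C_\Delta$ is upper triangular. Combining the two triangularity statements gives that $C_\Delta$ is diagonal, namely $C_\Delta=\diag([\Delta(1):S_1],\dots,[\Delta(n):S_n])$.

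Finally I would identify these diagonal entries with the $d_i$. Because $C_\Delta$ is diagonal, $\Delta(i)$ has $S_i$ as its only composition factor, and using once more that $\dim_k S_i=1$ in the elementary case we obtain $d_i=\dim_k\Delta(i)=[\Delta(i):S_i]=[C_\Delta]_{i,i}$ (each $d_i\geq 1$, since $\Delta(i)\neq 0$). Hence $C_\Delta=\diag(d_1,\dots,d_n)$, and Corollary \ref{Delta-C}(a) yields $G_\Lambda\simeq\Coker\,(C_\Delta)=\bigoplus_{i=1}^n\Z/d_i\Z$, as claimed. I do not expect a genuine obstacle: the argument is a direct bookkeeping reduction once the hypothesis is read as an above-diagonal vanishing, the only point requiring mild care being the twofold use of elementarity, first to equate $\dim_k\Hom_\Lambda(P_i,\Delta(j))$ with the multiplicity $[\Delta(j):S_i]$, and then to equate $\dim_k\Delta(i)$ with $[\Delta(i):S_i]$ after diagonality has been established.
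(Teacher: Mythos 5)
Your proposal is correct and follows essentially the same route as the paper's own proof: use elementarity to identify $[C_\Delta]_{i,j}$ with $\dim_k\Hom_\Lambda(P_i,\Delta(j))$, combine the hypothesis (vanishing for $i<j$) with the standard-module property (vanishing for $i>j$) to get $C_\Delta=\diag(d_1,\dots,d_n)$, and then apply Corollary \ref{Delta-C}. The paper merely compresses these steps into two lines, while you spell out the two triangularity directions separately; there is no substantive difference.
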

 \begin{proof}  Note that  $[C_\Delta]_{i,j}=\dim_k\, \Hom(P(i), \Delta(j)) = 0$ for $i \neq j.$ Then, for any $i\in[1,n],$ the standard module $\Delta(i)$ has as composition factors only the simple $S_i.$ Moreover, $[C_\Delta]_{i,i}=d_i,$ since 
 $\dim_k\,(S_i)=1.$ Hence, by Corollary \ref{Delta-C} we get the result.
 \end{proof}
 
 \begin{rk}\label{DiagD} Let $(\Lambda, \leq)$ be a standardly stratified $k$-algebra and $n=rkK_0(\Lambda).$
 \
 
 (1) Let $i<j.$ Then $\Hom_\Lambda(P_i, \Delta(j)) =0$ implies that $\Hom_\Lambda(\Delta(i), \Delta(j)) =0.$ But the converse is not true, as can be seen in Example \ref{Ej1}.
 \
 
 (2) The following statements are equivalent
 \begin{itemize}
 \item[(a)] the matrix $C_\Delta$ is diagonal;
 \item[(b)] $\Hom_\Lambda(P_i, \Delta(j)) =0 $ for $i< j;$
 \item[(c)] $(\Lambda, \leq)$ is a weakly triangular algebra.
 \end{itemize}
We only proof that {\rm (b) $\Rightarrow$ (c).} Let $[\Delta(j):S_i] =0 $ for $i< j.$ We assert that   $[P_j:S_i]=0$ for $i< j.$  We do inverse induction on $[1,n].$
Since $P_n= \Delta_n$ the result holds if $j=n.$ For $j<n,$ we have the following short exact sequence
$$0\ra \Tr_{\oplus_{t>j}P_t}(P_j) \ra P_j \ra \Delta_j\ra 0.$$
The left side of the above sequence has, by induction, composition factors only $S_t$'s with $t > j,$ and the right hand side
has compositions factors only $S_j.$ Thus our assertion follows.
\

(3) $C_{\Delta}=\diag(1,1,\cdots,1)$ if and only if $\Lambda$ is triangular. 
\

Indeed, note that $C_{\Delta}=\diag(1,1,\cdots,1)$ 
$\Leftrightarrow$ $[\Delta(i):S_j]=\delta_{i.j}$ for all $i,j$ $\Leftrightarrow$ $\modu\,(\Lambda)=\F(\Delta).$ Then, the item (3) follows from 
\cite[Theorem 4.8]{MMSS}
 \end{rk}

 \begin{rk}  (1) One class, where the hypothesis of Corollary \ref{A1} holds, is the class of the algebras with all idempotent ideals projective. However,  there are  standardly stratified algebras, satisfying the  hypothesis of Corollary \ref{A1}, and such that not all  the idempotent ideals are projective. The 
  Example \ref{Ej2} shows that fact. 
  \
  
  (2) The Cartan group does not distinguish  iso-classes of standardly stratified algebras. Indeed, the algebras given in Example \ref{Ej2} and Example \ref{Ej3} are not isomorphic, but their Cartan group are isomorphic. Note that, in these particular examples, the Cartan matrices are different.
 \end{rk}
 
 \begin{ex}\label{Ej2}  Let $\Lambda=k\mathcal{Q}/I,$ where $\mathcal{Q}$ is the quiver 
$$\xymatrix{& 2  \ar[dr]^{\gamma} & \\
1\ar[ur]^{\beta}\ar[dr]_\delta & & 4 \ar@(r,u)[]^{\alpha}\\
& 3  \ar[ur]_{\varepsilon} & }$$
and $I=\left\langle \gamma\beta-\varepsilon\delta,\alpha^2\right\rangle.$ The structure of the indecomposable projective $\Lambda$-modules is as follows
$$P(1)=\begin{matrix}&1&\\2&&3\\ &4&\\&4& \end{matrix},\quad P(2)=\begin{matrix}2\\4\\4 \end{matrix},\quad P(3)=\begin{matrix}3\\4\\4 \end{matrix},\quad  P(4)=\begin{matrix}4\\4 \end{matrix}.$$
Therefore $\Delta(1)=S(1),$ $\Delta(2)=S(2),$ $\Delta(3)=S(3)$ and $\Delta(4)=P(4).$ Thus, the $\Delta$-Cartan matrix is diagonal and 
${}_\Delta C=\mathrm{diag}(1,1,1,2).$ Then, by Corollary \ref{Delta-C} it follows $G_\Lambda\simeq \Z/2\Z.$ Moreover, for 
 $\overline{P}(1):=P(2)\oplus P(3)\oplus P(4),$  the idempotent ideal $\mathrm{Tr}_{\overline{P}(1)}(\Lambda)$ is not projective, since the 
 summand $\mathrm{Tr}_{\overline{P}(1)}(P(1))$ is not projective. Finally, note that 
$$C_\Lambda=\begin{pmatrix} 1 &0&0&0\\1&1&0&0\\1&0&1&0\\2&2&2&2\\
\end{pmatrix}.$$
 \end{ex}
 
 \begin{ex}\label{Ej3} Let $\Lambda=k\mathcal{Q}/I,$ where $\mathcal{Q}$ is the quiver given in Example \ref{Ej2} and $I=\left\langle\alpha^2\right\rangle.$ Note that 
 the projective modules $P(2),$ $P(3)$ and $P(4)$ are the same as in Example \ref{Ej2}. Moreover, $\rad\,P(1)=P(2)\oplus P(3)$ and thus the $\Delta$-Cartan matrix is ${}_\Delta C=\mathrm{diag}(1,1,1,2).$ Hence $G_\Lambda\simeq \Z/2\Z.$ Note that
$$C_\Lambda=\begin{pmatrix} 1 &0&0&0\\1&1&0&0\\1&0&1&0\\4&2&2&2\\
\end{pmatrix}.$$
 \end{ex}

In the following example, it is shown that any non-trivial abelian group can be realized as the Cartan group of some standardly stratified algebra.

\begin{ex}\label{Ej4} Let $(n_1,n_2,\cdots,n_k)$ be a sequence of positive integers such that $n_i\geq 2,$ for any $i\in[1,k],$ and let
$\Lambda=k\mathcal{Q}/I,$ where $\mathcal{Q}$ is the quiver
\vspace{0.2cm}

\begin{tikzpicture}
\node (c1) at (.2,0) {};
\node (c2) at (2.2,0) {};
\node (v1) at (0,-.3) {};
\node (v2) at (2.2,-.3) {};
\node (p1) at (0,-.3) {$\bullet$};
\node (n1) at (0,-.7) {$1$};
\node (p2) at (2.2,-.3) {$\bullet$};
\node (n2) at (2.2,-.7) {$2$};
\draw[->] (c1) to [out=130,in=70] (c2);
\draw[->] (p1) .. controls ($(p1)+(135:1.4)$) and ($(p1)+(225:1.4)$) .. (v1);
\draw[->] (p2) .. controls ($(p2)+(135:1.4)$) and ($(p2)+(225:1.4)$) .. (v2);
\node at ($(p2)+(-1.1,0)$) {$\alpha_2$};
\node at (1.3,1) {$\beta_1$};
\node at ($(p1)+(-1.1,0)$) {$\alpha_1$};
\node at (3.4,-.3) {$\dots$};
\node (pk1) at (6,-.3) {$\bullet$};
\node (nk1) at (6.2,-.7) {$k-1$};
\node (pk) at (8.1,-.3) {$\bullet$};
\node (nk) at (8.1,-.7) {$k$};
\node (ck1) at (6,0) {};
\node (ck) at (8,0) {};
\node (vk1) at (6,-.3) {};
\node (vk) at (8,-.3) {};
\draw[->] (ck1) to [out=130,in=70] (ck);
\draw[->] (pk1) .. controls ($(pk1)+(135:1.4)$) and ($(pk1)+(225:1.4)$) .. (vk1);
\draw[->] (pk) .. controls ($(pk)+(135:1.4)$) and ($(pk)+(225:1.4)$) .. (vk);
\node at ($(pk1)+(-1.3,0)$) {$\alpha_{k-1}$};
\node at ($(pk)+(-1.1,0)$) {$\alpha_{k}$};
\node at (6.8,1) {$\beta_{k-1}$};
\end{tikzpicture}\\
and  $I=\langle\alpha_1^{n_1},\alpha_2^{n_2},\ldots ,\alpha_{k}^{n_k}\rangle.$ The structure of the indecomposable projective 
$\Lambda$-modules $P(i)$ and $P(k)$ is \\
\begin{tikzpicture}
\node (p1) at (0,0) {$P(i)$};
\node (p3) at (9,0) {$P(k)$};
\node (11) at (0,-1) {$i$};
\node (12) at (0,-2.5) {$i$};
\node (r21) at (2,-2.3) {$P(i+1)$};
\node (r22) at (2,-3.8) {$P(i+1)$};
\draw[->] (11) -- (r21);
\node at (1.1,-1.4) {$\beta_i$};
\node at (1.1,-2.8) {$\beta_i$};
\draw[->] (12) -- (r22);
\node at (0,-4) {$\vdots$};
\node (13) at (0,-5) {$i$};
\node (14) at (0,-6.5) {$i$};
\node (r23) at (2,-6.5) {$P(i+1)$};
\node (r24) at (2,-8) {$P(i+1)$};
\draw[->] (13) -- (r23);
\draw[->] (14) -- (r24);
\draw[->] (11) -- (12);
\draw[->] (13) -- (14);
\coordinate [label=left:{$\alpha_i$}] (1a1) at (0,-1.75);
\coordinate [label=left:{$\alpha_i$}] (1c1) at (0,-5.8);
\node at (1.1,-5.4) {$\beta_i$};
\node at (1.1,-7) {$\beta_i$};
\node (33) at (9,-1.75) {$k$};
\node (32) at (9,-3.25) {$k$};
\node (31) at (9,-5) {$k$};
\node (30) at (9,-6.5) {$k$};
\draw[->] (33) -- (32);
\node at (9,-3.9) {$\vdots$};
\draw[->] (31) -- (30);
\coordinate [label=right:{$\alpha_k$}] (3b2) at (9,-2.5);
\coordinate [label=right:{$\alpha_k$}] (3b1) at (9,-5.8);
\end{tikzpicture}\\
Therefore, $\Delta(i)$ is uniserial and $[\Delta(i):S_j]=n_i\delta_{i,j},$ for any $i,j\in[1,k].$ Thus $C_{\Delta}=\diag(n_1,n_2,\cdots,n_k)$ and then $G_\Lambda=\oplus_{i=1}^k\,\Z/n_i\Z.$
\end{ex}

\begin{lem}\label{Cqh} Let $\Lambda$ be an artin algebra such that  $(\Lambda,\leq)$ is a standardly stratified algebra for some linear order 
on $[1,n],$ where $n:=rkK_0(\Lambda),$ and let $i\in[1,n].$ If $\End_\Lambda(\Delta(i))$ is a division ring then $[\Delta(i):S_i]=1.$
\end{lem}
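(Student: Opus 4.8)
The plan is to make the multiplicity $[\Delta(i):S_i]$ visible as a length \emph{inside} the ring $\End_\Lambda(\Delta(i))$, and then use that this ring is a division ring to force that length down to $1$. First I would record the structural facts about the standard module. Any map $P_j\to P_i$ with $j\neq i$ has image in $\rad\,P_i$ (since $\Hom_\Lambda(P_j,S_i)=0$), so $\Tr_{\oplus_{j>i}P_j}(P_i)\subseteq\rad\,P_i$; hence $\Delta(i)$ is nonzero with simple top $S_i$, giving $[\Delta(i):S_i]\geq 1$, and the projection $\pi\colon P_i\to\Delta(i)$ is a projective cover. Because all composition factors of $\Delta(i)$ are among the $S_l$ with $l\leq i$, exactness of $\Hom_\Lambda(P_j,-)$ gives $\Hom_\Lambda(P_j,\Delta(i))=0$ for every $j>i$. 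Since $\ker\pi=\Tr_{\oplus_{j>i}P_j}(P_i)$ is the sum of the images of the maps $P_j\to P_i$ with $j>i$, every $h\in\Hom_\Lambda(P_i,\Delta(i))$ kills $\ker\pi$ (each composite $P_j\to P_i\to\Delta(i)$ lies in the vanishing group) and thus factors through $\pi$. Therefore $\pi^*\colon\End_\Lambda(\Delta(i))\to\Hom_\Lambda(P_i,\Delta(i))$, $g\mapsto g\pi$, is an isomorphism.

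Next I would introduce an additive length function $\ell$ (length over the center of $\Lambda$, or simply $\dim_k$ when $\Lambda$ is a finite-dimensional $k$-algebra); all groups in sight have finite length. From $\Hom_\Lambda(P_i,S_j)=\End_\Lambda(S_i)$ for $j=i$ and $0$ otherwise, exactness of $\Hom_\Lambda(P_i,-)$ and induction along a composition series give the multiplicity formula $\ell\,\Hom_\Lambda(P_i,M)=[M:S_i]\,\ell\,\End_\Lambda(S_i)$ for every finite-length $M$. Combined with the isomorphism $\pi^*$ from the first paragraph this yields $\ell\,\End_\Lambda(\Delta(i))=[\Delta(i):S_i]\,\ell\,\End_\Lambda(S_i)$, which is the artin-algebra form of Theorem~\ref{Mepss}(b) for the canonical system $(\Delta,\underline{Q},\leq)$ with $B\simeq\Lambda$.

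Finally I would bring in the hypothesis. Every endomorphism of $\Delta(i)$ preserves $\rad\,\Delta(i)$, hence induces one on the top, giving an algebra map $\phi\colon\End_\Lambda(\Delta(i))\to\End_\Lambda(S_i)$. If $\End_\Lambda(\Delta(i))$ is a division ring then $\phi$ is injective: a nonzero $f$ is invertible, so surjective, so nonzero modulo $\rad\,\Delta(i)$, whence $\phi(f)\neq 0$. Injectivity forces $\ell\,\End_\Lambda(\Delta(i))\leq\ell\,\End_\Lambda(S_i)$; comparing with the formula of the previous paragraph and cancelling the positive factor $\ell\,\End_\Lambda(S_i)$ gives $[\Delta(i):S_i]\leq 1$, hence $[\Delta(i):S_i]=1$. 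The structural verifications are routine, and the hypothesis enters only through the injectivity of $\phi$; the crux of the whole argument is the identification $\End_\Lambda(\Delta(i))\cong\Hom_\Lambda(P_i,\Delta(i))$, which converts the multiplicity into a length inside the endomorphism ring so that the division-ring bound can pinch it to $1$.
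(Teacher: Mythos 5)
Your proof is correct, and it shares with the paper the decisive step: the identification $\End_\Lambda(\Delta(i))\simeq\Hom_\Lambda(P_i,\Delta(i))$ induced by the canonical epimorphism $\pi_i\colon P_i\to\Delta(i)$, justified in both cases by the fact that every composite $P_j\to P_i\to\Delta(i)$ with $j>i$ vanishes, so any morphism $P_i\to\Delta(i)$ kills the trace $\Tr_{\oplus_{j>i}P_j}(P_i)$. Where you genuinely diverge is in how the division-ring hypothesis is spent. The paper argues by contradiction: assuming $[\Delta(i):S_i]\geq 2$, it locates a copy of $S_i$ as a subquotient of $\rad\,\Delta(i)$, lifts it to a nonzero morphism $\alpha\colon P_i\to\Delta(i)$ whose image lies inside $\rad\,\Delta(i)$, writes $\alpha=\beta\pi_i$ via the identification, and observes that $\beta\neq 0$ must then be invertible, forcing $\alpha$ to be an epimorphism --- contradicting $\Ima\,(\alpha)\subseteq\rad\,\Delta(i)$. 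You instead argue directly by counting: the identification together with the multiplicity formula gives $\ell\,\End_\Lambda(\Delta(i))=[\Delta(i):S_i]\,\ell\,\End_\Lambda(S_i)$ (as you note, this is the artin form of Theorem \ref{Mepss}(b) for the canonical system), and the hypothesis makes the restriction-to-top map $\phi\colon\End_\Lambda(\Delta(i))\to\End_\Lambda(S_i)$ injective, which pinches the multiplicity to $1$. Both arguments ultimately reduce the hypothesis to ``nonzero implies invertible implies surjective,'' but your version is a direct proof that isolates the quantitative refinement $\ell\,\End_\Lambda(\Delta(i))=[\Delta(i):S_i]\,\ell\,\End_\Lambda(S_i)$ and makes transparent exactly where the bound comes from, at the cost of the length bookkeeping; the paper's version avoids all counting and needs only one explicitly constructed morphism, but proceeds by contradiction and hides the multiplicity formula it is implicitly using.
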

\begin{dem} Let $\End_\Lambda(\Delta(i))$ be a division ring, and let $\Delta'(i):=\rad\,\Delta(i).$ Suppose that $[\Delta(i):S_i]\geq 2.$ Then, there exist submodules $M\subseteq N\subseteq\Delta'(i)$ 
such that $N/M\simeq S_i.$ Thus, there is a non zero morphism $t:P_i\to \Delta'(i)/M.$ Since $P_i$ is projective,  $t$ factors trough the canonical epimorphism 
$\pi:\Delta'(i)\to\Delta'(i)/M,$ that is, we have some $t':P_i\to \Delta'(i)$ such that $\pi t'=t.$ Let $\alpha:P_i\to \Delta(i)$ be the composition 
of $t':P_i\to \Delta'(i)$ with the inclusion $\Delta'(i)\subseteq \Delta(i).$ Note that $\alpha\neq 0,$ since $t\neq 0.$
\

Using the fact that $(\Lambda,\leq)$ is a standardly stratified algebra, for the canonical epimorphism $\pi_i:P_i\to \Delta(i),$ we have that 
$$\Hom_\Lambda(\pi_i,\Delta(i)):\End_\Lambda(\Delta(i))\to\Hom_\Lambda(P_i,\Delta(i))$$
is an isomorphism. Thus, $\exists\,\beta\in\End_\Lambda(\Delta(i))$ such that $\beta\pi_i=\alpha\neq 0$ and hence $\beta$ is an isomorphism. In 
particular $\alpha$ is an epimorphism and therefore $\rad\,\Delta(i)=\Delta(i),$ which is a contradiction. Then, we conclude that $[\Delta(i):S_i]=1.$
\end{dem}
\vspace{0.2cm}

As we will see below, for a standardly stratified algebra $\Lambda,$ the group $G_{\Lambda}$ gives a measure of how far this  algebra  is from being quasi-hereditary. 

\begin{cor}\label{A2} Let $(\Lambda, \leq)$ be a standardly stratified $k$-algebra. Then, $(\Lambda, \leq)$ is quasi-hereditary if and only if $G_\Lambda=0.$
\end{cor}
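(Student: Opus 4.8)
The plan is to prove Corollary \ref{A2} by combining the quantitative formula from Corollary \ref{Delta-C}(a) with the characterization of quasi-hereditary algebras in terms of the standard modules, using Lemma \ref{Cqh} to bridge the two directions. Recall that, by definition, $(\Lambda,\leq)$ is quasi-hereditary precisely when $\End_\Lambda(\Delta(i))$ is a division ring for every $i\in[1,n]$. Meanwhile, Corollary \ref{Delta-C}(a) gives the identity $|G_\Lambda|=\prod_{i=1}^n\dim_k\End_\Lambda(\Delta(i))$, and by Lemma \ref{Eq} together with Proposition \ref{detord} the group $G_\Lambda$ is finite with $|G_\Lambda|=|\det C_\Delta|$. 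So the whole argument reduces to showing that this product equals $1$ if and only if the relevant endomorphism rings are all division rings.

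For the direction that quasi-hereditary implies $G_\Lambda=0$, I would argue as follows. Suppose $(\Lambda,\leq)$ is quasi-hereditary, so each $\End_\Lambda(\Delta(i))$ is a division ring. Since each $\Delta(i)$ is a finite dimensional $k$-module, $\End_\Lambda(\Delta(i))$ is a finite dimensional division $k$-algebra; the natural map $k\to\End_\Lambda(\Delta(i))$ gives $\dim_k\End_\Lambda(\Delta(i))\geq 1$. The key point is to show this dimension is exactly $1$. I would invoke Lemma \ref{Cqh}: when $\End_\Lambda(\Delta(i))$ is a division ring, $[\Delta(i):S_i]=1$. Combined with Theorem \ref{Mepss}(b) (specialized in Corollary \ref{Delta-C}(b)), which identifies $\dim_k\End_\Lambda(\Delta(i))$ with $[\Delta(i):S_i]\dim_k\End({}_BS(i))$, and using that over a quotient path algebra $\dim_k\End({}_BS(i))=1$, we conclude $\dim_k\End_\Lambda(\Delta(i))=1$ for all $i$. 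Hence $|G_\Lambda|=\prod_{i=1}^n 1=1$, so $G_\Lambda=0$. (In the general, non-elementary setting one works after passing to the basic algebra, as in the proof of Corollary \ref{Delta-C}.)

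For the converse, suppose $G_\Lambda=0$, so $|G_\Lambda|=1$. By Corollary \ref{Delta-C}(a) this forces $\prod_{i=1}^n\dim_k\End_\Lambda(\Delta(i))=1$. Since each factor is a positive integer at least $1$, every factor must equal $1$, i.e.\ $\dim_k\End_\Lambda(\Delta(i))=1$ for all $i$. A one dimensional $k$-algebra that is also the endomorphism ring of a module is just $k$ itself, which is a division ring; therefore every $\End_\Lambda(\Delta(i))$ is a division ring, and by definition $(\Lambda,\leq)$ is quasi-hereditary.

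The main obstacle I anticipate is not in the logical skeleton, which is short, but in handling the endomorphism rings cleanly when $k$ is not algebraically closed or $\Lambda$ is not elementary. The statement ``$\dim_k\End_\Lambda(\Delta(i))=1$ implies division ring'' is trivial, but the reverse implication ``division ring implies $\dim_k=1$'' genuinely requires Lemma \ref{Cqh} plus the multiplicity formula, and it is only there that the structural input about standardly stratified algebras enters; a finite dimensional division $k$-algebra need not be $k$ in general, so one must be careful that the endomorphism rings of the simple modules contribute a factor of $1$, which is where reduction to the basic (elementary) case is used. Once that reduction is in place, the equivalence falls out immediately from the product formula.
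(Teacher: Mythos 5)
Your proof is correct and takes essentially the same route as the paper's: the forward direction uses Lemma \ref{Cqh} to get $[\Delta(i):S_i]=1$ and then the formulas of Corollary \ref{Delta-C} to conclude $|G_\Lambda|=1$, while the converse reads off $\dim_k\End_\Lambda(\Delta(i))=1$ for all $i$ from the product formula of Corollary \ref{Delta-C}(a). The paper's proof is simply a terser version of yours, leaving implicit both the reduction to the elementary (basic) case and the final observation that a one-dimensional endomorphism algebra is a division ring.
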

\begin{dem} Let $(\Lambda, \leq)$ be quasi-hereditary. Then, by Lemma \ref{Cqh} we get that $[\Delta(i):S_i]=1,$ for any $i\in[1,n].$ Therefore, from 
Corollary \ref{Delta-C}, it follows that $|G_\Lambda|=1.$
\

Let $G_\Lambda=0.$ Then, from Corollary \ref{Delta-C} we get that $\dim_k\End_\Lambda(\Delta(1))=1,$ for any $i\in[1,n].$
\end{dem}
\

Let  $\{P_i\}_{i=1}^n$ be a complete family of pairwise non-isomorphic indecomposable projective $\Lambda$-modules. For $i\in[1,n),$ we set  $P^{(i)}:=\oplus_{j=1}^i\,P_j,$ $\Sigma_i:=\End_\Lambda(P^{(i)})^{op},$ $\overline{P}_i:=\oplus_{j> i}\,P_j$ and 
$\Gamma_i:=\End_\Lambda(\overline{P}_i)^{op}.$

\begin{teo}\label{EqDiagM}
 Let $\Lambda=k\mathcal{Q}/I$ be a quotient path $k$-algebra, and let  $\leq$ be the natural order on $\mathcal{Q}_0:=[1,n].$
 Then, for any $i\in[1,n),$ the following statements are equivalent.
 \begin{itemize}
  \item[(a)] $C_{{}_\Lambda\Delta}$ is a diagonal matrix and \ssa is a standardly stratified algebra.
  \item[(b)] \ssa is weakly triangular. Moreover, for any $i\in[1,n),$ the pairs   $(\Sigma_i,\leq\!\!|_{[1,i]})$ and $(\Gamma_i,\leq\!\!|_{[i+1,n]})$ are standardly stratified 
  algebras satisfying that their corresponding $\Delta$-Cartan matrices  are diagonal and $\Hom_\Lambda(\overline{P}_i,P^{(i)})\in\F({}_{\Gamma_i}\Delta).$
  
  \item[(c)] 
  $\Lambda$ is isomorphic to a triangular matrix $k$-algebra  
  $\begin{pmatrix}
   R & 0\\
        M      & T
 \end{pmatrix},$ where $M$ is a $T-R$-bimodule. Moreover $(R,\leq\!\!|_{[1,i]})$ and $(T,\leq\!\!|_{[i+1,n]})$ are standardly stratified 
  algebras satisfying that their corresponding $\Delta$-Cartan matrices are diagonal and $M\in\F({}_T\Delta).$
 \end{itemize}
\end{teo}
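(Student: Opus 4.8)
The plan is to fix the idempotent $e:=e_1+\cdots+e_i\in\Lambda$ and to organize everything around the Peirce decomposition of $\Lambda$ at $e$, proving the two equivalences (a)$\Leftrightarrow$(b) and (b)$\Leftrightarrow$(c), with the bulk of the work in (a)$\Leftrightarrow$(b). The starting observation is that, by the equivalences recorded in Remark \ref{DiagD}, for a standardly stratified $(\Lambda,\leq)$ the matrix $C_{{}_\Lambda\Delta}$ is diagonal if and only if $(\Lambda,\leq)$ is weakly triangular; thus both (a) and (b) force weak triangularity, and this is what unlocks the block structure. Concretely, weak triangularity says $\Hom_\Lambda(P_a,P_b)=0$ for $a<b$, i.e. $e_a\Lambda e_b=0$ for $a<b$, and in particular $e\Lambda(1-e)=0$. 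Hence $\Lambda\simeq\begin{pmatrix}R&0\\ M&T\end{pmatrix}$ with $R:=e\Lambda e\simeq\Sigma_i$, $T:=(1-e)\Lambda(1-e)\simeq\Gamma_i$ and $M:=(1-e)\Lambda e\simeq\Hom_\Lambda(\overline{P}_i,P^{(i)})$, which already matches the data of (b) with that of (c).

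The key technical lemma, to be proved first, is that weak triangularity alone makes the standard modules split along the two blocks. Because the off-diagonal block vanishes, both corner algebras are in fact quotients of $\Lambda$: one has $R\simeq\Lambda/\Lambda(1-e)\Lambda$ and $T\simeq\Lambda/\Lambda e\Lambda$, the two ideals being precisely the respective kernels. Since $\Delta(j)$ is by definition the maximal quotient of $P_j$ with composition factors among $S_l$ with $l\leq j$, for $j\leq i$ the module $\Delta(j)$ is supported on $[1,i]$ (so $(1-e)\Delta(j)=0$) and for $j>i$ it is supported on $[i+1,n]$. Applying the exact functor $e(-)$, which sends $P_l^\Lambda$ to $P_l^R$ for $l\leq i$ and to $0$ for $l>i$ (here $eP_l^\Lambda=e\Lambda e_l=e\Lambda(1-e)e_l=0$), and noting that it carries the trace submodule defining $\Delta_\Lambda(j)$ to the corresponding trace over $R$, one gets $\Delta_\Lambda(j)\simeq\Delta_R(j)$ for $j\leq i$; symmetrically, using $(1-e)(-)$, one gets $\Delta_\Lambda(j)\simeq\Delta_T(j)$ for $j>i$. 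Consequently $C_{{}_\Lambda\Delta}$ is automatically block diagonal with diagonal blocks $C_{{}_R\Delta}$ and $C_{{}_T\Delta}$, so full diagonality of $C_{{}_\Lambda\Delta}$ is equivalent to diagonality of both $C_{{}_R\Delta}$ and $C_{{}_T\Delta}$.

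It then remains to transport the condition ${}_\Lambda\Lambda\in\F(\Delta)$ across the two exact idempotent functors. For the forward direction I would apply $e(-)$ and $(1-e)(-)$ to a $\Delta_\Lambda$-filtration of each $P_j^\Lambda$: since these functors carry $\Delta_\Lambda(l)$ to $\Delta_R(l)$ or $\Delta_T(l)$ (one of them possibly to $0$), one reads off $P_j^R\in\F({}_R\Delta)$ and $P_j^T\in\F({}_T\Delta)$, so $R$ and $T$ are standardly stratified, and also $Me_j=(1-e)P_j^\Lambda\in\F({}_T\Delta)$, whence $M=\bigoplus_{j\leq i}Me_j\in\F({}_T\Delta)$. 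For the converse I would reconstruct the filtrations: for $j>i$ one has $P_j^\Lambda\simeq P_j^T\in\F({}_T\Delta)$ directly, while for $j\leq i$ the Peirce description $P_j^\Lambda\simeq\begin{pmatrix}P_j^R\\ Me_j\end{pmatrix}$ gives a short exact sequence $0\to Me_j\to P_j^\Lambda\to P_j^R\to 0$ of $\Lambda$-modules whose submodule is $\Delta_T$-filtered (as $Me_j$ is a direct summand of $M\in\F({}_T\Delta)$, using closure of $\F(\Delta)$ under direct summands) and whose quotient is $\Delta_R$-filtered; since $\Delta_R(l)=\Delta_\Lambda(l)$ and $\Delta_T(l)=\Delta_\Lambda(l)$, the extension-closedness of $\F(\Delta_\Lambda)$ yields $P_j^\Lambda\in\F(\Delta_\Lambda)$, so $\Lambda$ is standardly stratified.

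Finally, (b)$\Leftrightarrow$(c) is essentially the Peirce identification recorded above; the only point needing argument is that (c) returns the weak triangularity of $(\Lambda,\leq)$ demanded in (b). This follows by combining $e\Lambda(1-e)=0$ with the weak triangularity of $R$ and $T$ themselves, which is granted by Remark \ref{DiagD} applied to the standardly stratified algebras $R,T$ with diagonal $\Delta$-Cartan matrices, to recover $e_a\Lambda e_b=0$ for all $a<b$. The main obstacle I expect is the standard-module decomposition lemma: verifying that the exact functors $e(-)$ and $(1-e)(-)$ send each standard module precisely to a standard module (or to zero) and interact correctly with the trace submodules cutting out the $\Delta(j)$. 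Once that lemma is established, everything else is formal transport across the two quotient functors together with the extension- and summand-closure of $\F(\Delta)$.
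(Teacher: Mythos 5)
Your proposal is correct, and its skeleton coincides with the paper's: both arguments start from Remark \ref{DiagD} (for a standardly stratified algebra, $C_\Delta$ diagonal $\Leftrightarrow$ weak triangularity), then use the vanishing $e\Lambda(1-e)=0$ for $e:=e_1+\cdots+e_i$ to produce the Peirce/triangular-matrix decomposition with $R\simeq\Sigma_i$, $T\simeq\Gamma_i$, $M\simeq\Hom_\Lambda(\overline{P}_i,P^{(i)})$, and the block equality $C_{{}_\Lambda\Delta}=C_{{}_R\Delta}\oplus C_{{}_T\Delta}$. The difference is in how the standardly-stratified condition is transported across this decomposition: the paper outsources exactly this step to \cite[Proposition 16]{MMS} (the criterion that a triangular matrix algebra $\begin{pmatrix} R & 0\\ M & T\end{pmatrix}$ is standardly stratified if and only if $R$ and $T$ are and $M\in\F({}_T\Delta)$), whereas you prove it in place: your splitting lemma $\Delta_\Lambda(j)\simeq\Delta_R(j)$ for $j\leq i$ and $\Delta_\Lambda(j)\simeq\Delta_T(j)$ for $j>i$ (via $R\simeq\Lambda/\Lambda(1-e)\Lambda$, $T\simeq\Lambda/\Lambda e\Lambda$ and the exact functors $e(-)$, $(1-e)(-)$), followed by pushing filtrations forward and reassembling them from $0\to Me_j\to P_j^\Lambda\to P_j^R\to 0$, is precisely the content of that citation. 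Note also that the paper's unproved assertion ``$C_{{}_\Lambda\Delta}=C_{{}_{\Sigma_i}\Delta}\oplus C_{{}_{\Gamma_i}\Delta}$ as matrices'' is exactly your splitting lemma, so your version makes explicit what the paper leaves implicit. What your route buys is a self-contained proof (at the cost of length); what the paper's buys is brevity. Two small points of care in your write-up: the fact that $\Delta_\Lambda(j)$ is supported on $[i+1,n]$ for $j>i$ comes from weak triangularity ($e_l\Lambda e_j=0$ for $l\leq i<j$, so all of $P_j$ is so supported), not from the definition of the standard module alone, as your parenthetical computation correctly shows; and the closure of $\F({}_T\Delta)$ under direct summands, which you invoke for $Me_j$, is a nontrivial known fact for standardly stratified algebras (the paper uses it too, in Lemma \ref{RS02}), so it deserves an explicit citation.
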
 
\begin{dem} (a) $\Rightarrow$ (b) By Remark \ref{DiagD}, we get that \ssa is weakly triangular. In particular, 
$\Hom_\Lambda(P^{(i)},\overline{P}_i)=0$ and thus 
$$\Lambda\simeq\begin{pmatrix}
\Sigma_i & 0\\ \Hom_\Lambda(\overline{P}_i,P^{(i)})& \Gamma_i
\end{pmatrix}.$$
Thus $C_{{}_\Lambda\Delta} =C_{{}_{\Sigma_i}\Delta} \oplus C_{{}_{\Gamma_i}\Delta}$ as matrices and then  
$C_{{}_{\Sigma_i}\Delta}$ and $C_{{}_{\Gamma_i}\Delta}$ are diagonal. Therefore \cite[Proposition 16]{MMS} implies (b).
\

(b) $\Rightarrow$ (c) Since \ssa is weakly triangular, we get as before the same triangular matrix decomposition of $\Lambda$ as above. Therefore, $C_{{}_\Lambda\Delta}$ is diagonal, since $C_{{}_{\Sigma_i}\Delta}$ and $C_{{}_{\Gamma_i}\Delta}$ are diagonal. Finally, 
by \cite[Proposition 16]{MMS}, we can take $R:=\Sigma_i,$ $T:=\Gamma_i$ and $M:=\Hom_\Lambda(\overline{P}_i,P^{(i)}).$
\

(c) $\Rightarrow$ (a) It follows from \cite[Proposition 16]{MMS} and the equality $C_{{}_\Lambda\Delta} =C_{{}_R\Delta} \oplus C_{{}_T\Delta}.$
\end{dem}

 In the second section, we gave some properties of presentations of finite abelian groups. These can be applied to the group $G_{\Lambda}$ for 
 a standardly stratified algebra $\Lambda,$ since the $\Delta$-Cartan matrix $C_\Delta$ is upper triangular. We turn our attention to a 
 particular case where the $\det(C_\Delta) =[C_\Delta]_{i,i}$ for some $i.$ The Example \ref{Ej2} shows that this situation occurs, more specifically, in this case $\det(C_\Delta) =[C_\Delta]_{4,4}=2.$ 

\begin{cor}\label{CEqDiagM} Let $\Lambda=k\mathcal{Q}/I$ be a quotient path $k$-algebra,  \ssa be a standardly  stratified algebra, where $\leq$ is the natural order on $\mathcal{Q}_0:=[1,n],$ and let $C_{{}_\Lambda\Delta}=\diag(d_1,d_2,\cdots,d_n).$ Then, 
$$\Lambda\simeq \begin{pmatrix}
   R & 0\\
        M      & T
 \end{pmatrix},$$
and the following statements hold true.
 \begin{itemize}
  \item[(a)] Let $d_j=1,$ for $j\in[2,n].$  Then $\Sigma_1\simeq R$ is local and $T\simeq \Gamma_1$ is triangular.
  \item[(b)] Let $d_j=1,$ for $j\in[1,n).$  Then $\Sigma_{n-1}\simeq R$ is triangular and $T\simeq \Gamma_{n-1}$ is local. Moreover, 
  ${}_T M\in\proj\,(T).$
  \item[(c)] Let $d_j=1,$ for $j\in[1,n]-\{t\},$  for some $t\in[2,n-1].$ Then $\Sigma_t\simeq R,$ with $C_{{}_R\Delta}=\diag(1,\cdots,1,d_t),$ and $T\simeq \Gamma_t$ is triangular.
 \end{itemize}
\end{cor}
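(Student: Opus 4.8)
The plan is to derive all three statements directly from Theorem \ref{EqDiagM}, reading off the appropriate diagonal block of $C_{{}_\Lambda\Delta}$ for a well-chosen index $i$. Since $(\Lambda,\leq)$ is standardly stratified with diagonal $\Delta$-Cartan matrix, Theorem \ref{EqDiagM} guarantees that for every $i\in[1,n)$ one has the triangular decomposition $\Lambda\simeq\begin{pmatrix}\Sigma_i & 0\\ \Hom_\Lambda(\overline{P}_i,P^{(i)}) & \Gamma_i\end{pmatrix}$, that both $(\Sigma_i,\leq\!\!|_{[1,i]})$ and $(\Gamma_i,\leq\!\!|_{[i+1,n]})$ are standardly stratified with diagonal $\Delta$-Cartan matrices, that $\Hom_\Lambda(\overline{P}_i,P^{(i)})\in\F({}_{\Gamma_i}\Delta)$, and that $C_{{}_\Lambda\Delta}=C_{{}_{\Sigma_i}\Delta}\oplus C_{{}_{\Gamma_i}\Delta}$. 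Because $\leq$ is the natural order, this block splitting forces $C_{{}_{\Sigma_i}\Delta}=\diag(d_1,\cdots,d_i)$ and $C_{{}_{\Gamma_i}\Delta}=\diag(d_{i+1},\cdots,d_n)$. This identification is the engine behind each part.

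For part (a) I would take $i=1$ and set $R:=\Sigma_1$, $T:=\Gamma_1$. Then $C_{{}_R\Delta}=(d_1)$ while $C_{{}_T\Delta}=\diag(1,\cdots,1)$. Since $R=\End_\Lambda(P_1)^{op}$ is the opposite of the endomorphism ring of the indecomposable module $P_1$, it is local; and by Remark \ref{DiagD}(3) the equality $C_{{}_T\Delta}=\diag(1,\cdots,1)$ says precisely that $T$ is triangular. Part (c) is the same argument with $i=t$: now $C_{{}_R\Delta}=\diag(1,\cdots,1,d_t)$ (as claimed, since $d_1=\cdots=d_{t-1}=1$) and $C_{{}_T\Delta}=\diag(1,\cdots,1)$, so $T=\Gamma_t$ is triangular by Remark \ref{DiagD}(3).

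Part (b) requires the choice $i=n-1$, giving $R:=\Sigma_{n-1}$ with $C_{{}_R\Delta}=\diag(1,\cdots,1)$ (hence $R$ triangular by Remark \ref{DiagD}(3)) and $T:=\Gamma_{n-1}=\End_\Lambda(P_n)^{op}$, which is local since $P_n$ is indecomposable. The only genuinely new point is the projectivity of ${}_TM$, where $M=\Hom_\Lambda(P_n,P^{(n-1)})$. Here I would use that $M\in\F({}_T\Delta)$ by Theorem \ref{EqDiagM}, together with the observation that a local standardly stratified algebra has a single simple and a single standard module, which must coincide with the regular module ${}_TT$ (there is no strictly larger index to factor out). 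Consequently $\F({}_T\Delta)=\F({}_TT)=\add({}_TT)=\proj(T)$, where the middle equality holds because every self-extension of the projective module ${}_TT$ splits, and the filtration of $M$ therefore decomposes into free summands, yielding ${}_TM\in\proj(T)$.

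The main (and essentially the only) obstacle is this last projectivity claim: one must verify that for the local algebra $T$ the standard module is the whole regular module, so that being $\Delta$-good coincides with being projective. Everything else is bookkeeping: selecting the index $i$ so that the prescribed pattern of $1$'s lands in the correct diagonal block, and then invoking the equivalence of Remark \ref{DiagD}(3) together with the locality of endomorphism rings of indecomposable modules.
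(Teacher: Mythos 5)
Your proposal is correct and follows essentially the same route as the paper: the paper's own proof is a one-line citation of Theorem \ref{EqDiagM} and Remark \ref{DiagD}(3), and your argument is precisely the expansion of that citation, choosing $i=1$, $i=n-1$, $i=t$ respectively and reading off the diagonal blocks. Your extra details --- locality of $\End_\Lambda(P_i)^{op}$ for the indecomposable $P_i$, and the splitting argument showing $\F({}_T\Delta)=\proj(T)$ when $T$ is local (so that ${}_TM\in\proj(T)$ in part (b)) --- are exactly the facts the paper leaves implicit, and they are verified correctly.
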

\begin{proof} It follows from Remark \ref{DiagD} (3) and Theorem \ref{EqDiagM}.
 \end{proof}
 
 Let $\Lambda$ be a finite dimensional $k$-algebra. We recall that $\cP^{<\infty }(\Lambda)$ is the class of the $\Lambda$-modules 
 $M\in\modu\,(\Lambda),$ which have finite projective dimension. A very important homological measure of $\Lambda$ is the finitistic dimension 
 $\findim(\Lambda):=\sup\{\pd(M) \;:\; M\in\cP^{<\infty }(\Lambda)\}.$ On open question, till now, says that $\findim(\Lambda)$ is finite for any 
 finite dimensional $k$-algebra $\Lambda.$ 
 \
 
 Let \ssa be  a standardly  stratified algebra.  We are now interested in investigate the cases when $\F(_\Lambda\Delta) =\cP^{<\infty }(\Lambda),$ where 
 $\cP^{<\infty }(\Lambda)$ stands for the class of al $\Lambda$-modules of finite projective dimension. The preceding equality implies, by \cite[Proposition 3.17 (a)]{MMS3}, that $\findim\,(\Lambda)=\pd\,(T)\leq n-1,$ where $T$ is the characteristic tilting 
 $\Lambda$-module associated with \ssa  and $n=rkK_0(\Lambda).$

\begin{cor}\label{TP-inf} Let $\Lambda=k\mathcal{Q}/I$ be a quotient path $k$-algebra,  $\leq$ be the natural order on $\mathcal{Q}_0=[1,n]$ and let  \ssa be a standardly stratified and weakly triangular $k$-algebra. Then 
 $$\F({}_\Lambda\Delta) =\cP^{<\infty }(\Lambda).$$
\end{cor}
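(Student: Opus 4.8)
The plan is to prove the two inclusions $\F({}_\Lambda\Delta)\subseteq\cP^{<\infty}(\Lambda)$ and $\cP^{<\infty}(\Lambda)\subseteq\F({}_\Lambda\Delta)$ separately. Since $(\Lambda,\leq)$ is standardly stratified and weakly triangular, Remark \ref{DiagD}(2) gives that $C_{{}_\Lambda\Delta}$ is diagonal, so each ${}_\Lambda\Delta(i)$ has $S_i$ as its only composition factor and, dually, each $P_i$ has composition factors only among the $S_j$ with $j\geq i$. The forward inclusion is the easy one: I would show by downward induction on $i$ that $\pd\,{}_\Lambda\Delta(i)<\infty$. For $i=n$ one has $P_n={}_\Lambda\Delta(n)$, which is projective; for $i<n$ the defining sequence $0\to U_i\to P_i\to{}_\Lambda\Delta(i)\to 0$ has $U_i\in\F(\{{}_\Lambda\Delta(j):j>i\})$, whence $\pd\,U_i<\infty$ by induction and $\pd\,{}_\Lambda\Delta(i)\leq \pd\,U_i+1$. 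As $\cP^{<\infty}(\Lambda)$ is closed under extensions, this yields $\F({}_\Lambda\Delta)\subseteq\cP^{<\infty}(\Lambda)$.

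For the reverse inclusion I would argue by induction on $n=rk\,K_0(\Lambda)$, splitting off the top vertex $n$. Writing $e:=e_n$ and $f:=1-e_n$, weak triangularity gives $f\Lambda e=0$, so by Theorem \ref{EqDiagM} one has $\Lambda\simeq\begin{pmatrix} R & 0\\ M & T\end{pmatrix}$ with $R:=\Sigma_{n-1}$, $T:=\Gamma_{n-1}=\End_\Lambda(P_n)^{op}$ local, and where $R$ is again a standardly stratified, weakly triangular quotient path algebra with diagonal $\Delta$-Cartan matrix on the vertices $[1,n-1]$, so that the induction hypothesis applies to it. I would view a $\Lambda$-module as a triple $(X,Y,\psi)$ with $X\in\modu\,R$, $Y\in\modu\,T$, and use the three exact functors: restriction $\rho_R(X,Y,\psi)=X$ to $\modu\,R$, and the fully faithful embeddings $\iota_R(X)=(X,0,0)$ and $\iota_T(Y)=(0,Y,0)$. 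The two facts I would record are $\iota_T({}_TT)=P_n={}_\Lambda\Delta(n)$ and $\iota_R({}_R\Delta(i))={}_\Lambda\Delta(i)$ for $i<n$, so that $\iota_R$ carries $\F({}_R\Delta)$ into $\F({}_\Lambda\Delta)$ and $\iota_T$ carries $\proj\,(T)=\F({}_T\Delta)$ into $\F({}_\Lambda\Delta)$.

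Given $Z=(X,Y,\psi)\in\cP^{<\infty}(\Lambda)$, I would exploit the natural short exact sequence $0\to\iota_T(Y)\to Z\to\iota_R(X)\to 0$. Since $\rho_R$ is exact and sends each indecomposable projective to a projective (or to $0$), it preserves finite projective dimension, so $X\in\cP^{<\infty}(R)$; the induction hypothesis then gives $X\in\F({}_R\Delta)$, hence $\iota_R(X)\in\F({}_\Lambda\Delta)\subseteq\cP^{<\infty}(\Lambda)$ by the forward inclusion. Therefore $\iota_T(Y)=\Ker(Z\to\iota_R(X))$ also lies in $\cP^{<\infty}(\Lambda)$. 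Because $P_n=\iota_T({}_TT)$ is projective and $\iota_T$ is exact, a minimal projective resolution of $Y$ over $T$ maps under $\iota_T$ to one of $\iota_T(Y)$ over $\Lambda$, giving $\pd_\Lambda\iota_T(Y)=\pd_T Y$; thus $Y\in\cP^{<\infty}(T)$. Applied to the local algebra $T$, the base-case lemma (a radical monomorphism $T^a\hookrightarrow T^b$ with $a\geq 1$ is impossible, since it would annihilate a nonzero element placed in the first coordinate of $(\rad T)^{m-1}$, where $(\rad T)^m=0$; hence over a local algebra finite projective dimension forces freeness) yields $Y\in\proj\,(T)$, so $\iota_T(Y)\in\F({}_\Lambda\Delta)$. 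Finally $Z$ is an extension of $\iota_R(X)$ by $\iota_T(Y)$, both in $\F({}_\Lambda\Delta)$, whence $Z\in\F({}_\Lambda\Delta)$.

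The main obstacle I anticipate is the bookkeeping of projective dimensions under the triangular decomposition — specifically the identity $\pd_\Lambda\iota_T(Y)=\pd_T Y$ and the compatibility of $\rho_R$ with projective resolutions — together with the base case, whose real content is that a local Artin algebra has finitistic dimension zero; everything else is the formal manipulation of the functors $\rho_R,\iota_R,\iota_T$ and the natural sequence above.
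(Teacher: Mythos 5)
Your proposal is correct, and it proves the statement, but it is not the paper's proof: the two share only the inductive skeleton. The paper also inducts on $n=rk\,K_0(\Lambda)$ with the local case as base, and also uses Remark \ref{DiagD} (2) together with Theorem \ref{EqDiagM} to split $\Lambda$ as a one-vertex triangular extension, but it splits at the \emph{bottom}: taking $i=1$, the local algebra $\Sigma_1=\End_\Lambda(P_1)^{op}$ sits in the upper-left corner, the induction hypothesis is applied to $\Gamma_1$ (vertices $[2,n]$), one notes $\pd\,({}_{\Gamma_1}M)<\infty$ because $M=\Hom_\Lambda(\overline{P}_1,P^{(1)})\in\F({}_{\Gamma_1}\Delta)$, and then the whole gluing step is delegated to \cite[Theorem 19]{MMS}. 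You split at the \emph{top} ($T=\End_\Lambda(P_n)^{op}$ local, induction applied to $R=\Sigma_{n-1}$) and you prove the gluing yourself, via the triple description of modules, the canonical sequence $0\to\iota_T(Y)\to Z\to\iota_R(X)\to 0$, the exact functors $\rho_R,\iota_R,\iota_T$, and the fact that a local Artin algebra has finitistic dimension zero. The top-vertex choice is exactly what makes your hands-on gluing cheap: $P_n=\iota_T(T)=\Delta(n)$ is projective, $\rho_R(P_n)=0$ by weak triangularity, minimal $T$-resolutions transport to minimal $\Lambda$-resolutions (since $\rad_\Lambda\iota_T(N)=\iota_T(\rad_T N)$), and you never need the condition $M\in\F({}_T\Delta)$ supplied by Theorem \ref{EqDiagM}, whereas in the paper's bottom splitting that condition is essential, since there $P_1$ is the triple $(\Sigma_1,M,\mathrm{id})$ and conditions on $M$ are precisely what the cited theorem controls. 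What each approach buys: the paper's proof is a few lines, but its real content lives in \cite{MMS}; yours is longer but self-contained, isolates the only non-formal input (finitistic dimension zero for local algebras), and in passing gives a direct proof of the easy inclusion $\F({}_\Lambda\Delta)\subseteq\cP^{<\infty}(\Lambda)$, which the paper treats as known (\cite[Proposition 1.8]{AHLU}).
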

\begin{proof} The proof will be carried on by induction on $n= rk K_0(\Lambda).$ If $n=1,$ we get that $\Lambda$ is local and thus 
$\F(_\Lambda\Delta) =\proj\,(\Lambda)=\cP^{<\infty }(\Lambda).$
\

Let $n\geq 2.$ By Remark \ref{DiagD} (2) and Proposition \ref{EqDiagM}, we have that 
$$\Lambda \simeq \begin{pmatrix}
   \Sigma_1 & 0\\ M& \Gamma_1
 \end{pmatrix},$$
where $M:=\Hom_\Lambda(\overline{P}_1,P^{(1)})\in\F({}_{\Gamma_1}\Delta),$ $\Sigma_1$ is local and $(\Gamma_i,\leq|_{[i+1,n]})$ is standardly stratified and weakly triangular algebra. Then $\F({}_{\Sigma_1}\Delta)=\cP^{<\infty }(\Sigma_1),$  $\pd\,({}_{\Gamma_1}M)$ is finite, and by induction $\F({}_{\Gamma_1}\Delta)=\cP^{<\infty }(\Gamma_1).$ Therefore, from \cite[Theorem 19]{MMS} it follows that $\F({}_\Lambda\Delta) =\cP^{<\infty }(\Lambda).$
\end{proof}

\section{Radical square zero algebras}

In this section, we study radical square zero path $k$-algebras. Such an algebra $\Lambda$  is of the form $\Lambda=kQ/J^2,$ where $J$ is the ideal 
of $kQ$ which is generated by the set of arrows $Q_1$ of the quiver $Q.$ Let $v\in Q_0$ be a vertex of $Q.$ The idempotent in $\Lambda,$ attached to the vertex $v,$ is denoted by 
$e_v.$ Then, associated with the vertex $v,$ we have: the projective $\Lambda$-module $P(v):=\Lambda e_v$ and the simple $\Lambda$-module 
$S(v):=P(v)/\rad\,P(v).$ Given a  path $\gamma$ in $Q,$ we say that it starts at the vertex $s(\gamma)$ and ends at $t(\gamma).$

\begin{lem}\label{RS01} Let $\Lambda=kQ/J^2$ and $v\in Q_0$ be a vertex in an oriented cycle of $Q.$ Then $\pd\,S(v)=\infty.$
\end{lem}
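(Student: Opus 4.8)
The plan is to analyze the minimal projective resolution of $S(v)$ using the very simple structure of modules over a radical square zero algebra $\Lambda=kQ/J^2$. The key observation is that for such an algebra, $\rad\,P(w)$ is semisimple for every vertex $w$, and in fact $\rad\,P(w)\simeq\bigoplus_{w\to u}S(u)$, where the sum runs over the arrows in $Q_1$ starting at $w$ (counted with multiplicity). Consequently the projective cover of a simple module $S(w)$ is $P(w)$, and the kernel of $P(w)\to S(w)$ is the semisimple module $\rad\,P(w)$. This means that the syzygy operator $\Omega$ acts in a transparent combinatorial way: $\Omega\,S(w)=\rad\,P(w)=\bigoplus_{w\to u}S(u)$, so computing syzygies amounts to following arrows in the quiver.

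First I would make precise the statement that $\Omega\,S(w)\simeq\bigoplus_{(w\to u)\in Q_1}S(u)$ for every vertex $w$, which is immediate from $J^2=0$ since $\rad\,P(w)=Je_w$ is annihilated by $J$ and hence semisimple, with its composition factors indexed exactly by the arrows out of $w$. Next, using the hypothesis that $v$ lies on an oriented cycle $v=w_0\to w_1\to\cdots\to w_\ell\to w_0=v$ in $Q$, I would argue that $S(v)$ can never have finite projective dimension. The idea is that each simple summand $S(w_j)$ appearing in a syzygy has a nonzero arrow to $S(w_{j+1})$, so when we pass to the next syzygy the summand $S(w_{j+1})$ reappears. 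Iterating around the cycle, the simple module $S(v)$ itself (or, more robustly, some $S(w_j)$ on the cycle) occurs as a direct summand of $\Omega^m\,S(v)$ for infinitely many $m$. Since each such syzygy is therefore nonzero, the minimal projective resolution of $S(v)$ never terminates, giving $\pd\,S(v)=\infty$.

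The cleanest way to carry out the iteration is to track a single simple summand along the cycle. Because $w_j\to w_{j+1}$ is an arrow, $S(w_{j+1})$ is a direct summand of $\Omega\,S(w_j)$; a simple induction then shows $S(w_j)$ is a direct summand of $\Omega^j\,S(v)$ for $0\le j\le\ell$, and closing the cycle gives that $S(v)$ is a summand of $\Omega^{\ell+1}\,S(v)$. By induction on $m$ one obtains that $S(v)$ is a direct summand of $\Omega^{m(\ell+1)}\,S(v)$ for every $m\ge 0$, so in particular $\Omega^{m(\ell+1)}\,S(v)\neq 0$ for all $m$. Since a module has finite projective dimension precisely when some syzygy vanishes in the minimal resolution, this forces $\pd\,S(v)=\infty$.

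The main obstacle I anticipate is bookkeeping around direct summands versus isomorphism: the syzygies $\Omega^m\,S(v)$ are in general large semisimple modules with many summands, and one must be careful that the relevant summand $S(w_{j+1})$ genuinely survives rather than being cancelled. This is handled by the fact that we work with the \emph{minimal} projective resolution, so the syzygy $\Omega\,M$ of a semisimple module $M=\bigoplus_w S(w)^{a_w}$ is exactly $\bigoplus_w(\rad\,P(w))^{a_w}$ with no cancellation possible (all maps from projectives to semisimples have image inside the radical, so the cover is minimal and the multiplicities add up honestly). Thus the presence of a single arrow $w_j\to w_{j+1}$ guarantees a nonzero contribution of $S(w_{j+1})$ to the next syzygy, and the summand on the cycle can never be lost. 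Once this minimality point is nailed down, the combinatorial propagation around the cycle finishes the argument.
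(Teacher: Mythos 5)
Your proof is correct and is essentially the paper's own argument: both rest on the observation that over $kQ/J^2$ the minimal syzygy of $S(w)$ is $\rad\,P(w)=\bigoplus_{w\to u}S(u)$, so propagating along the arrows of the cycle makes $S(v)$ reappear as a direct summand of arbitrarily high syzygies, forcing the minimal projective resolution to be infinite. Your syzygy-operator formulation is a slightly cleaner write-up, treating loops and proper cycles uniformly where the paper splits into two cases and splices the short exact sequences into a long exact sequence around the cycle.
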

\begin{dem} Since $\rad^2\,\Lambda=0,$ it follows that for any $w\in Q_0$ there is the exact sequence 
$0\to \bigoplus_{\alpha\in Q_1,\,s(\alpha)=w}\;S(t(\alpha))\to P(w)\to S(w)\to 0.$
\

Let $v$ be a vertex in an oriented cycle $C$ of $Q.$ If $C$  has a loop at the vertex $v,$ then by using the above exact sequence it can be constructed an
infinite minimal projective resolution of the simple $S(v)$ and thus 
$\pd\,S(v)=\infty.$
\

Suppose there is not a loop at the vertex $v.$  We can assume that $C=i_1\to i_2\to\cdots\to i_n\to i_1,$ where $v=i_1$ and all the vertices of the cycle are different to each other. By using the above exact sequence, in the vertices of $C,$ it can be constructed the following minimal long exact sequence
$$0\to S({i_1})\oplus K\to P(i_n)\oplus Q(i_n)\to\cdots \to P(i_2)\oplus Q(i_2)\to P(i_1)\to S(i_1)\to 0,$$ 
where all the $Q(i_j)'$s are projective. Then by using the above long exact sequence it can be constructed an
infinite minimal projective resolution of the simple $S(i_1);$ proving that  
$\pd\,S(i_1)=\infty.$
\end{dem}

\begin{lem}\label{RS02} Let $\Lambda=kQ/J^2$ and $\leq$ be a linear order on $Q_0$ such that $(\Lambda,\leq)$ is a standardly stratified $k$-algebra. 
If there is an arrow $v_1\to v_2$ in $Q_1$ such that $v_1<v_2,$ then $\Delta(v_2)=S(v_2).$
\end{lem}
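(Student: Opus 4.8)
The plan is to exploit the semisimplicity of the radical forced by $\rad^2\Lambda=0$ and to read off $\Delta(v_2)$ from the canonical $\Delta$-filtration of the projective $P(v_1)$. First I would record the basic structure: since $\rad^2\Lambda=0$, for every vertex $w$ the radical $\rad P(w)$ is semisimple, namely $\rad P(w)\cong\bigoplus_{\alpha\in Q_1,\,s(\alpha)=w}S(t(\alpha))$. In particular the arrow $v_1\to v_2$ shows that $S(v_2)$ occurs as a direct summand of $\rad P(v_1)$, so $[P(v_1):S(v_2)]\geq 1$.

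Next I would isolate the relevant submodule of $P(v_1)$. Set $K:=\Tr_{\oplus_{w>v_1}P(w)}(P(v_1))$, so that by definition $\Delta(v_1)=P(v_1)/K$. Because $v_1<v_2$, the standard module $\Delta(v_1)$ has all its composition factors among the $S(w)$ with $w\leq v_1$, whence $[\Delta(v_1):S(v_2)]=0$; applying additivity of composition multiplicities to $0\to K\to P(v_1)\to\Delta(v_1)\to 0$ gives $[K:S(v_2)]=[P(v_1):S(v_2)]\geq 1$, so $S(v_2)$ is a composition factor of $K$. Moreover $K$ is semisimple: any homomorphism $P(w)\to P(v_1)$ with $w>v_1$ cannot be surjective, since $P(w)$ and $P(v_1)$ have non-isomorphic simple tops, so its image lies in the unique maximal submodule $\rad P(v_1)$; hence $K\subseteq\rad P(v_1)$, a semisimple module.

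The decisive input is that $(\Lambda,\leq)$ is standardly stratified, so $P(v_1)\in\F(\Delta)$. Fixing a $\Delta$-filtration of $P(v_1)$, its top factor is a quotient of $P(v_1)$ with simple top $S(v_1)$ and hence must be $\Delta(v_1)$; by maximality of the standard quotient (equivalently, comparing dimensions) the corresponding submodule is exactly $K$, so $K$ is itself $\Delta$-filtered. I would then combine this with semisimplicity: each standard module $\Delta(m)$ appearing in the $\Delta$-filtration of $K$ is a subquotient of a semisimple module, hence semisimple, and being indecomposable with simple top it must equal a simple module, i.e. $\Delta(m)=S(m)$. The composition factors of $K$ are precisely the simples $S(m)$ arising this way, and since $S(v_2)$ is among them we conclude $\Delta(v_2)=S(v_2)$.

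The only genuine obstacle is the standardly stratified input in the third step, namely that the kernel $K$ of the canonical epimorphism $P(v_1)\twoheadrightarrow\Delta(v_1)$ is again $\Delta$-filtered; everything else is elementary module theory for $\rad^2=0$ algebras together with additivity in the Grothendieck group. I would dispose of it by the standard observation that the top factor of any $\Delta$-filtration of a projective $P(i)$ is $\Delta(i)$, so that removing it leaves a $\Delta$-filtered submodule which coincides with $K$.
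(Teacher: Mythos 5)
Your proof is correct, and it shares its skeleton with the paper's: both arguments concentrate on the trace submodule $K=U(v_1):=\Tr_{\oplus_{j>v_1}P(j)}(P(v_1))$ (the kernel of $P(v_1)\to\Delta(v_1)$), observe that $K\subseteq\rad P(v_1)$ is semisimple because $\rad^2\Lambda=0$, and then use the standardly stratified hypothesis to force the relevant standard module to be simple. The difference lies in how that last step is executed. The paper views $S(v_2)$ as a submodule, hence a direct summand, of the semisimple module $U(v_1)$, asserts $U(v_1)\in\F(\Delta)$ as a known property of standardly stratified algebras, and then invokes the closure of $\F(\Delta)$ under direct summands to conclude $S(v_2)\in\F(\Delta)$, so that $S(v_2)$ must itself be a standard module, necessarily $\Delta(v_2)$. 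You instead treat $S(v_2)$ only as a composition factor of $K$ (via additivity of multiplicities), prove from scratch that $K\in\F(\Delta)$ --- the top factor of any $\Delta$-filtration of $P(v_1)$ has simple top $S(v_1)$, hence is $\Delta(v_1)$, and its kernel equals $K$ by maximality of the standard quotient together with a dimension count --- and then let semisimplicity of $K$ collapse every $\Delta$-factor of that filtration to a simple module, reading off $\Delta(v_2)=S(v_2)$ from the composition factors. Your route buys self-containedness: it avoids both of the paper's external inputs, namely that the trace is $\Delta$-filtered and, more significantly, that $\F(\Delta)$ is closed under direct summands, which is a genuinely nontrivial theorem in this setting (for a general class $\Theta$, the category $\F(\Theta)$ need not be closed under summands). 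The paper's route is shorter once those background facts are granted. Both proofs are complete.
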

\begin{dem} Let $v_1\to v_2$ be an arrow in $Q_1$ such that $v_1<v_2.$ Since $\rad^2\,\Lambda=0,$ the arrow $v_1\to v_2$ implies that 
$S(v_2)\subseteq P(v_1).$ Thus, the simple module $S(v_2)$ is contained in $U(v_1):=\Tr_{\bigoplus_{j>v_1}\,P(j)}(P(v_1))\in\F(\Delta).$ Note that 
$U(v_1)\subseteq \rad\,P(v_1)$ and $\rad\,P(v_1)$ is a semisimple module. Then, by using  that $\F(\Delta)$ is closed under direct summands, we get that  $S(v_2)\in\F(\Delta).$ Therefore $S(v_2)=\Delta(v)$ for some vertex $v\in Q_0.$ Finally, note that $v_2=v$ since $\Delta(v)$ has composition factors among the $S(j)$ with $j\leq v.$
\end{dem}

\begin{pro}\label{RS03} Let $\Lambda=kQ/J^2$ and $\leq$ be a linear order on $Q_0$ such that $(\Lambda,\leq)$ is a standardly stratified $k$-algebra. Then, there exists a linear order $\leq'$ on $Q_0$ such that $(\Lambda,\leq)$ is a weakly-triangular algebra.
\end{pro}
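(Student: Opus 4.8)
The plan is to reduce the statement to a purely combinatorial condition on $Q$ and then invoke Proposition \ref{WT}. By that proposition, a linear order $\leq'$ on $Q_0$ with $(\Lambda,\leq')$ weakly triangular exists if and only if $Q$ has no proper oriented cycle. So it suffices to prove that the hypothesis that $(\Lambda,\leq)$ is standardly stratified forces $Q$ to have no proper oriented cycle; the desired order $\leq'$ is then produced by the quasi-source construction in the proof of Proposition \ref{WT}.

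First I would argue by contradiction, assuming $Q$ has a proper oriented cycle. Choosing it minimal, write $C\colon i_1\to i_2\to\cdots\to i_m\to i_1$ with pairwise distinct vertices and no loops, so $m\geq 2$. Let $v$ be the $\leq$-maximal vertex occurring on $C$. Since the vertices of $C$ are distinct and $v$ is maximal, the arrow of $C$ entering $v$ has the form $u\to v$ with $u<v$. Lemma \ref{RS02} then applies to this arrow and yields $\Delta(v)=S(v)$.

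Next I would extract a contradiction from the arrow of $C$ leaving $v$, say $v\to t$, where again $t<v$ because $v$ is $\leq$-maximal on $C$. Since $\rad^2\Lambda=0$, the radical $\rad\,P(v)=\bigoplus_{\alpha\in Q_1,\,s(\alpha)=v}S(t(\alpha))$ is semisimple and contains the summand $S(t)$ coming from $v\to t$. The key step is to show that $S(t)$ is not contained in $\Tr_{\oplus_{j>v}P_j}(P(v))$, so that $S(t)$ survives as a composition factor of $\Delta(v)=P(v)/\Tr_{\oplus_{j>v}P_j}(P(v))$, forcing $\Delta(v)\neq S(v)$ and contradicting the previous paragraph. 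For this I would observe that any homomorphism $f\colon P_j\to P(v)$ with $j>v$ has image inside $\rad\,P(v)$ (as $j\neq v$ the composite $P_j\to P(v)\to S(v)$ vanishes), and since $\rad\,P(v)$ is semisimple, $\Ima(f)$ is a quotient of $\topo(P_j)=S(j)$; hence $\Ima(f)$ lies in the $S(j)$-isotypic component of $\rad\,P(v)$. As $j>v>t$, no such image meets the $S(t)$-summand, whence $S(t)\not\subseteq\Tr_{\oplus_{j>v}P_j}(P(v))$, as desired.

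I expect the main obstacle to be the bookkeeping in this last isotypic-component argument, namely identifying $\Tr_{\oplus_{j>v}P_j}(P(v))$ as a subsum of $\rad\,P(v)$ and checking that the $S(t)$-component with $t<v$ is never hit. Everything else is either the maximality choice on the cycle or a direct appeal to Lemma \ref{RS02} and Proposition \ref{WT}. Once the contradiction is in place, $Q$ has no proper oriented cycle and the existence of the weakly triangular order $\leq'$ follows from Proposition \ref{WT}.
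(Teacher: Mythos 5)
Your proof is correct, but it derives the contradiction by a genuinely different mechanism than the paper's. Both arguments share the same skeleton: reduce via Proposition \ref{WT} to showing that $Q$ has no proper oriented cycle, take such a cycle with pairwise distinct vertices, locate an arrow $u\to v$ with $u<v$ (the paper by cyclically renumbering the cycle, you by taking $v$ to be the $\leq$-maximal vertex on it), and apply Lemma \ref{RS02} to conclude $\Delta(v)=S(v)$. At that point the paper goes homological: by Lemma \ref{RS01} the simple $S(v)$, lying on an oriented cycle, has infinite projective dimension, so $\pd\,\Delta(v)=\infty$, contradicting \cite[Proposition 1.8 (i)]{AHLU}, which forces standard modules over a standardly stratified algebra to have finite projective dimension. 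You instead stay inside the module structure: using $\rad^2\Lambda=0$ you show that every $f\colon P_j\to P(v)$ with $j\neq v$ lands in $\rad\,P(v)$ and kills $\rad\,P_j$, hence factors through $S(j)$, so $\Tr_{\oplus_{j>v}P_j}(P(v))$ is contained in the sum of the $S(j)$-isotypic components of the semisimple module $\rad\,P(v)$ with $j>v$; consequently the summand $S(t)$ coming from the outgoing arrow $v\to t$ (where $t<v$ by maximality) survives as a composition factor of $\Delta(v)$, contradicting $\Delta(v)=S(v)$. This bookkeeping is sound. What your route buys is self-containedness and elementarity: it avoids Lemma \ref{RS01} and the external homological input from \cite{AHLU} entirely. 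What the paper's route buys is brevity, since Lemma \ref{RS01} is established anyway for reuse (it reappears in the proof of Proposition \ref{TR1}) and the AHLU result is standard in this theory.
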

\begin{dem} By Proposition \ref{WT}, it is enough to show that $Q$ does not have proper oriented cycles. Suppose there is an oriented cycle 
$C=v_1\to v_2\to \cdots\to v_n\to v_1$ and all the vertices in this cycle are different to each other. By reordering the vertices of $\C,$ if it were 
necessary, we have that $v_1<v_2.$ Then, by Lemma \ref{RS01} and Lemma \ref{RS02}, it follows that $\pd\,\Delta(v_2)=\infty,$ which is a 
contradiction with \cite[Proposition 1.8 (i)]{AHLU}, since  $(\Lambda,\leq)$ is a standardly stratified $k$-algebra.
\end{dem}

\begin{teo} \label{RS04} Let $\Lambda=kQ/J^2$ and $\leq$ be a linear order on $Q_0$ such that $(\Lambda,\leq)$ is a standardly stratified $k$-algebra. If $Q$ does not have  sinks then $\findim\,\Lambda=0,$ $(\Lambda,\leq^{op})$ is weakly-triangular and $|G_\Lambda|=\prod_{v\in Q_0}\,[P(v):S(v)].$
\end{teo}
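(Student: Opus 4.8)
The plan is to prove the three assertions of Theorem \ref{RS04} in turn, extracting the necessary structural information from the fact that $Q$ has no sinks. First I would observe that since $Q$ has no sinks, every vertex $v\in Q_0$ is the source of at least one arrow, so for each $v$ the exact sequence from the proof of Lemma \ref{RS01}, namely $0\to\bigoplus_{\alpha\in Q_1,\,s(\alpha)=v}S(t(\alpha))\to P(v)\to S(v)\to 0$, has a nonzero kernel; in particular no $P(v)$ is simple. The key preliminary reduction is that by Proposition \ref{RS03}, $Q$ has no proper oriented cycles, so by Proposition \ref{WT} there is a linear order making $\Lambda$ weakly triangular. To pin down $(\Lambda,\leq^{op})$ specifically, I would use Lemma \ref{RS02}: for every arrow $v_1\to v_2$ with $v_1<v_2$ we have $\Delta(v_2)=S(v_2)$, which is what forces the off-diagonal structure to behave correctly under the opposite order.

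Next I would handle $\findim\,\Lambda=0$. Since $(\Lambda,\leq)$ is standardly stratified, $\F(_\Lambda\Delta)=\cP^{<\infty}(\Lambda)$ by Corollary \ref{TP-inf} once weak triangularity is in place, so $\findim\,\Lambda=\pd(T)$ for the characteristic tilting module $T$. The point is that the absence of sinks should force every simple that is \emph{not} standard to have infinite projective dimension, so that the only modules of finite projective dimension are the projectives themselves. Concretely, for the diagonal entries I would argue that $\Delta(v)$ is either $S(v)$ or $P(v)$ (since $\rad^2\Lambda=0$ makes $\rad P(v)$ semisimple, the standard module is cut out by deleting the composition factors $S(j)$ with $j>v$), and by combining this with Lemma \ref{RS01} conclude that any nonprojective module in $\F(\Delta)$ would have a simple summand lying on a cycle, contradicting finite projective dimension; hence $\cP^{<\infty}(\Lambda)=\proj(\Lambda)$ and $\findim\,\Lambda=0$.

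For the order count, I would apply Corollary \ref{Delta-C}, which gives $|G_\Lambda|=\prod_{v\in Q_0}\dim_k\End_\Lambda(\Delta(v))$, and then identify each factor. Since $\Lambda$ is a quotient path algebra it is elementary, so $\dim_k\End_\Lambda(\Delta(v))=[\Delta(v):S(v)]$ by Corollary \ref{Delta-C}(b). The heart of the computation is the equality $[\Delta(v):S(v)]=[P(v):S(v)]$. When $v$ is not minimal and carries no loop, Lemma \ref{RS02} applied across the deleted arrows gives $\Delta(v)=S(v)$, and one checks $[P(v):S(v)]$ is then also $1$; the genuinely interesting case is when $v$ lies on a loop, where $S(v)$ appears in $\rad P(v)$ and the trace construction $\Tr_{\oplus_{j>v}P(j)}(P(v))$ removes exactly the non-$S(v)$ simples, leaving the multiplicity of $S(v)$ in $\Delta(v)$ equal to that in $P(v)$. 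I expect the main obstacle to be precisely this bookkeeping: verifying that passing from $P(v)$ to $\Delta(v)$ preserves the multiplicity $[P(v):S(v)]$ in every case, which requires carefully tracking which arrows out of $v$ point to strictly larger vertices (and are thus killed in $\Delta(v)$) versus loops at $v$ (which survive), using the weak-triangularity structure forced by the no-sinks and no-proper-cycle hypotheses.
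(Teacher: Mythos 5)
Your proposal has the right raw ingredients, but the two load-bearing steps do not hold up as written. The first gap is the appeal to Corollary \ref{TP-inf}: that corollary requires $(\Lambda,\leq)$ to be standardly stratified \emph{and} weakly triangular with respect to the \emph{same} linear order, whereas in the situation of Theorem \ref{RS04} the stratifying order is $\leq$ and weak triangularity holds only for $\leq^{op}$ (or for the order $\leq'$ produced by Proposition \ref{RS03}). Concretely, in Example \ref{Ej5} the pair $(\Lambda,2<1)$ is standardly stratified and $Q$ has no sinks, yet $(\Lambda,2<1)$ is not weakly triangular (only $(\Lambda,1<2)$ is, since $\Hom_\Lambda(P(2),P(1))\neq 0$). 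So you cannot import $\F(\Delta)=\cP^{<\infty}(\Lambda)$ this way. The second gap is your justification of the statement that actually drives everything, namely that the absence of sinks forces $\cP^{<\infty}(\Lambda)=\proj(\Lambda)$: the sentence ``any nonprojective module in $\F(\Delta)$ would have a simple summand lying on a cycle'' is false as stated, since modules in $\F(\Delta)$ need not have simple direct summands, and (there being no proper oriented cycles) the relevant vertices need not lie on any cycle, so Lemma \ref{RS01} does not apply to them. The paper's argument is short and direct: if $M\in\cP^{<\infty}(\Lambda)$ were not projective, the last term $P_m$ ($m\geq 1$) of a finite minimal projective resolution embeds in $\rad\,P_{m-1}$, which is semisimple because $\rad^2\,\Lambda=0$; hence $P_m$ is a sum of simple projectives and $Q$ has a sink, a contradiction. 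Once this is in place, Corollary \ref{TP-inf} is not needed at all: one gets $\proj\,(\Lambda)=\F(\Delta)=\cP^{<\infty}(\Lambda)$ and $\findim\,\Lambda=0$ outright, so your detour is both inapplicable and unnecessary.

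The third problem is that you never actually prove that $(\Lambda,\leq^{op})$ (as opposed to some unnamed order) is weakly triangular; ``forces the off-diagonal structure to behave correctly'' is not an argument. It can be completed with your tools --- an arrow $v_1\to v_2$ with $v_1<v_2$ gives $\Delta(v_2)=S(v_2)$ by Lemma \ref{RS02}, and then $S(v_2)\in\F(\Delta)\subseteq\cP^{<\infty}(\Lambda)$ is non-projective because $v_2$ is not a sink, contradicting the key fact above --- but note this again routes through $\cP^{<\infty}(\Lambda)=\proj\,(\Lambda)$. The paper gets it for free: from $\F(\Delta)=\proj\,(\Lambda)$ each $\Delta(v)$ is projective, so the epimorphism $P(v)\to\Delta(v)$ splits and $\Delta(v)=P(v)$; then $\Hom_\Lambda(P(i),P(j))\neq 0$ with $i>j$ would force $[P(j):S(i)]=[\Delta(j):S(i)]\neq 0$, which is impossible, and the order formula follows at once from Corollary \ref{Delta-C} since $\dim_k\End_\Lambda(\Delta(v))=[\Delta(v):S(v)]=[P(v):S(v)]$. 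Your multiplicity bookkeeping in the last paragraph is correct but much easier than you fear --- for $\rad^2\,\Lambda=0$ the trace $\Tr_{\oplus_{j>v}P(j)}(P(v))$ is a direct sum of simples $S(j)$ with $j>v$, so $[\Delta(v):S(v)]=1+\mathrm{loop}(v)=[P(v):S(v)]$ holds with no hypotheses --- and it becomes superfluous once $\Delta(v)=P(v)$ is known.
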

\begin{dem} Assume that $Q$ does not have  sinks. We show that 
$$(*)\quad \proj\,\Lambda=\F(\Delta)=\cP^{<\infty }(\Lambda).$$
Indeed, since $(\Lambda,\leq)$ is standardly stratified, it follows by \cite[Proposition 1.8(i)]{AHLU}  that 
$\proj\,\Lambda\subseteq\F(\Delta)\subseteq\cP^{<\infty }(\Lambda).$ Then, in order to obtain $(*),$ it is enough to show that any 
$\Lambda$-module of finite projective dimension is projective. Suppose there is some $M\in \cP^{<\infty }(\Lambda)$ which is not projective. 
Then, there is a minimal projective resolution $0\to P_m\xrightarrow{d_m} P_{m-1}\to\cdots\to P_1\to P_0\to M\to 0$ of $M,$ with 
$m\geq 1.$ Since $\rad\, P_{m-1}$ is semisimple and $\Ima\,(d_m)\subseteq \rad\, P_{m-1},$ we get that $P_m$ is semisimple. Therefore there is 
a simple projective $\Lambda$-module, contradicting the fact that $Q$ does not have sinks; and thus $(*)$ holds true. 
\

Note that the equalities in $(*)$ imply that  $\findim\,\Lambda=0,$ and moreover $P(v)=\Delta(v)$ for any vertex $v\in Q_0.$ Let us prove that 
$(\Lambda,\leq^{op})$ is a weakly-triangular $k$-algebra. Suppose $\Hom_\Lambda(P(i),P(j))\neq 0$ for $i>j.$ In particular, $[P(j):S(i)]\neq 0$ for 
$i>j,$ which is a contradiction with $P(j)=\Delta(j);$ proving that $(\Lambda,\leq^{op})$ is  weakly-triangular. Finally, by Corollary \ref{Delta-C} we 
conclude that $|G_\Lambda|=\prod_{v\in Q_0}\,[P(v):S(v)].$
\end{dem}

\begin{ex}\label{Ej5}  Let $\Lambda=k Q/J^2,$ where $Q$ is the quiver $\xymatrix{1\ar[r]^\alpha & 2 \ar@(r,u)[]^{\beta}}.$  The structure of the indecomposable projective $\Lambda$-modules is as follows
$$P(1)=\begin{matrix}1\\2\end{matrix}\quad\text{and}\quad  P(2)=\begin{matrix}2\\2 \end{matrix}.$$
Consider the linear order $2<1$ in $Q.$ Therefore $\Delta(1)=P(1)$ and  $\Delta(2)=P(2).$ Thus, $(\Lambda, 2<1)$ is standardly stratified, 
$(\Lambda, 1<2)$ is weakly-triangular, $C_\Delta=C_\Lambda$ and $|G_\Lambda|=[P(1):S(1)][P(2):S(2)]=2.$
 \end{ex}
 
 Let $\Lambda=kQ/I,$ where $I$ is an admissible ideal of $kQ.$ It can be defined a pre-order relation $\leq_Q,$ on the set of vertices $Q_0,$ as 
 follows. For $v,w$ in $Q_0,$ we set $v\leq_Q w$ if there is an oriented path $\gamma$ in $Q$ with $s(\gamma)=v$ and $t(\gamma)=w.$ Let 
 $\leq$ be a pre-order relation on $Q_0.$ We say that $\leq$ is a {\bf refinement} of $\leq_Q$ if the relation $x\leq_Q y$ implies that $x\leq y.$ For 
 each vertex $v\in Q_0,$ we denote by $\mathrm{loop}(v)$ the number of loops in $Q$  starting at the vertex $v.$ 
 
\begin{rk}\label{RKTR} Let $\Lambda=kQ/I,$ where $I$ is an admissible ideal of $kQ.$ In general, 
 the relation $\leq_Q$ is not an order on $Q_0.$ However,  by Proposition \ref{WT}, the following three conditions are equivalent: (a) there is a linear order 
 $\leq$ on $Q_0$ such that $(\Lambda,\leq)$ is a weakly-triangular $k$-algebra,  (b) $(Q_0,\leq_Q)$ is a partially ordered set, and (c)  the quiver $Q$ does not have proper oriented cycles.
 \end{rk}
 
 \begin{pro}\label{TR1}  Let $\Lambda=k Q/J^2,$ and let $\leq$ be a linear order on $Q_0:=[1,n]$ such that $(\Lambda,\leq)$ is a standardly stratified $k$-algebra and $\leq$ is a refinement of $\leq_Q.$ Then, the possible loops in $Q$ appear only in quasi-sources and 
 $[\Delta(i):S(j)]=(1+ \mathrm{loop}(i))\delta_{ij}$ for any $i,j\in Q_0.$ Moreover, for any $i\in[1,n],$ we have
 $1+\mathrm{loop}(i)=\dim_k\End_\Lambda(P(i))=\dim_k\End_\Lambda(\Delta(i)).$
 \end{pro}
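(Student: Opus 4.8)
The plan is to determine the composition structure of every standard module $\Delta(i)$ explicitly, after which all three assertions follow quickly. First I would record that, since $\rad^2\,\Lambda=0,$ the radical of the projective $P(i)$ is the semisimple module $\rad\,P(i)=\bigoplus_{\alpha\in Q_1,\,s(\alpha)=i}S(t(\alpha)),$ exactly as in the exact sequence used in the proof of Lemma \ref{RS01}. Splitting the arrows out of $i$ into loops and non-loops, each loop contributes a copy of $S(i),$ whereas a non-loop arrow $i\to u$ (so $u\neq i$) is itself an oriented path from $i$ to $u,$ giving $i\leq_Q u$ and hence $i<u$ because $\leq$ refines $\leq_Q.$ Therefore $\rad\,P(i)=S(i)^{\oplus\mathrm{loop}(i)}\oplus\bigoplus_{u>i}S(u)^{\oplus m_{iu}}$ for suitable integers $m_{iu}\geq 0.$

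Next I would invoke the characterization, recalled in the preliminaries, of $\Delta(i)$ as the maximal quotient of $P(i)$ whose composition factors lie among the $S(j)$ with $j\leq i.$ Since $\rad\,P(i)$ is semisimple, this maximal quotient is obtained by factoring out precisely the summands $S(u)$ with $u>i;$ the copies of $S(i)$ coming from the loops are retained, since $i$ is not strictly larger than itself. Hence $S(i)$ is the only composition factor of $\Delta(i),$ occurring with multiplicity $1+\mathrm{loop}(i)$ (one in the top and one for each loop in the radical), which is exactly the asserted formula $[\Delta(i):S(j)]=(1+\mathrm{loop}(i))\delta_{ij}$ for all $i,j\in Q_0.$

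To see that loops occur only at quasi-sources, I would argue by contradiction. If a vertex $v$ carried a loop but failed to be a quasi-source, there would be an arrow $w\to v$ with $w\neq v,$ so $w<v$ by refinement, and then Lemma \ref{RS02} would force $\Delta(v)=S(v),$ that is $[\Delta(v):S(v)]=1;$ this contradicts the equality $[\Delta(v):S(v)]=1+\mathrm{loop}(v)\geq 2$ obtained above. For the final chain of equalities I would compute $\dim_k\End_\Lambda(P(i))=\dim_k e_i\Lambda e_i,$ which counts the paths from $i$ to $i$ of length at most one, namely $e_i$ together with the loops at $i,$ giving $1+\mathrm{loop}(i);$ and the remaining equality $\dim_k\End_\Lambda(\Delta(i))=[\Delta(i):S(i)]=1+\mathrm{loop}(i)$ follows from Corollary \ref{Delta-C}(b), which applies because $\Lambda=kQ/J^2$ is elementary.

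The hard part will be the bookkeeping in the first two steps: one must be sure that passing to the maximal quotient with composition factors $\leq i$ deletes every non-loop target $S(u)$ — which is exactly where the refinement hypothesis is used — while keeping each loop-induced copy of $S(i),$ so that the multiplicity comes out as $1+\mathrm{loop}(i).$ Once the structure of $\Delta(i)$ is correctly identified, the claim about quasi-sources and the endomorphism-dimension formulas are immediate consequences of Lemma \ref{RS02} and Corollary \ref{Delta-C}.
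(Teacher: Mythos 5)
Your proof is correct, and while it follows the paper for the computation of $\Delta(i)$ and for the endomorphism dimensions, it handles the quasi-source claim by a genuinely different mechanism. The paper splits into the cases $\mathrm{loop}(v)\geq 1$ and $\mathrm{loop}(v)=0$ and argues homologically in the first case: for a vertex $v$ with a loop that is not a quasi-source, an arrow $w\to v$ with $w<v$ forces $\Delta(v)=S(v)$ by Lemma~\ref{RS02}, then Lemma~\ref{RS01} gives $\pd\,\Delta(v)=\pd\,S(v)=\infty$ (as $v$ lies on an oriented cycle), contradicting \cite[Proposition 1.8(i)]{AHLU}, which forces standard modules over a standardly stratified algebra to have finite projective dimension. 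You instead establish the multiplicity formula $[\Delta(i):S(j)]=(1+\mathrm{loop}(i))\delta_{ij}$ first, uniformly in $i$ --- and you are right that this needs only $\rad^2\Lambda=0$ plus the refinement hypothesis, not the quasi-source property or any case split, since $\Tr_{\oplus_{j>i}P_j}(P_i)$ is exactly the sum of the summands $S(u)$ with $u>i$ of the semisimple module $\rad\,P(i)$ --- and then you obtain the quasi-source property as a pure multiplicity contradiction: Lemma~\ref{RS02} would give $[\Delta(v):S(v)]=1$, against your value $1+\mathrm{loop}(v)\geq 2$. This buys a more elementary and self-contained argument: no Lemma~\ref{RS01}, no appeal to \cite{AHLU}, and no case distinction. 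What the paper's route buys is economy in context rather than in substance, since Lemma~\ref{RS01} and the \cite{AHLU} citation are already deployed for Proposition~\ref{RS03}. The final chain of equalities is proved identically in both: $\End_\Lambda(P(i))\simeq e_i\Lambda e_i$ has dimension $1+\mathrm{loop}(i)$ because $\Lambda=kQ/J^2$, and $\dim_k\End_\Lambda(\Delta(i))=[\Delta(i):S(i)]$ by Corollary~\ref{Delta-C}(b), which applies since $\Lambda$ is elementary and $(\Lambda,\leq)$ is standardly stratified.
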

 \begin{dem} Let $v$ be a vertex in $Q_0.$ In order to prove the result, we consider the following two cases.
 \
 
 Case 1:  Let $\mathrm{loop}(v)\geq 1.$ We show, firstly, that $v$ is a quasi-source. Indeed, suppose that $v$ is not a quasi-source. Then, 
 there is an arrow $w\to v$ in $Q_1$ with $w\neq v,$  and thus  $w<v$ since $\leq$ is a refinement of $\leq_Q.$ Therefore,  by Lemma \ref{RS01} and 
 Lemma \ref{RS02}, we get that $\pd\,\Delta(v)=\infty$ contradicting \cite[Proposition 1.8(i)]{AHLU}. 
 \
 
 Let $t:=\mathrm{loop}(v).$ Then we have only $t$ loops $\alpha_1,\alpha_2,\cdots,\alpha_t$ at the point $v,$ and possibly there are $r$ arrows 
 $v\to v_i$ in $Q_1$ with $v\neq v_i.$ In particular, we get that $v<v_i$ for any $i\in[1,r].$ Note that 
 $\rad^2\,\Lambda=0,$ and hence
 $\rad\,P(v)=S(v)^t\bigoplus\bigoplus_{i=1}^r S(v_r).$ Therefore $[\Delta(v):S(j)]=(1+ t)\delta_{vj}$ since $v<v_i$ 
 for any $i\in[1,r].$ Finally, by Corollary \ref{Delta-C} (b) and the fact that $\End_\Lambda(P(v))\simeq e_v\Lambda e_v,$ it follows
 $$\dim_k\End_\Lambda(\Delta(v))=[\Delta(v):S(v)]=t+1=\dim_k\End_\Lambda(P(v)).$$
Case 2:  Let $\mathrm{loop}(v)=0.$ Since there are not loops at $v,$  we have  $\End_\Lambda(P(v))\simeq e_v\Lambda e_v\simeq k.$ We 
assert that $\Delta(v)=S(v).$ If $v$ is not a source, then there is an arrow $w\to v$ with $v\neq w,$ and by Lemma \ref{RS02}, we get that 
$\Delta(v)=S(v).$
\

Assume now that $v$ is a source. Let $v\to v_i,$ for $i\in[1,r],$ be all possible arrows starting at the vertex $v.$ Since there are not loops at $v,$ we have that $v<v_i$ for any $i.$ Therefore $\rad\,P(v)=\bigoplus_{i=1}^r\,S(v_i)$ and hence $\Delta(v)=S(v).$
 \end{dem}
 
 \begin{pro}\label{TR2}  Let $\Lambda=k Q/J^2$ be such that $Q$ does not have proper oriented cycles and the possible loops in $Q$ appear only in quasi-sources. Then, for any linear order $\leq $ on $Q_0$ which is a refinement of $\leq_Q,$ the pair $(\Lambda,\leq)$ is a standardly stratified 
 $k$-algebra.
\end{pro}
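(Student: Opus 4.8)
The plan is to show directly that every indecomposable projective $P(v)$ lies in $\F(\Delta)$; since a direct sum of $\Delta$-filtrations is again a $\Delta$-filtration, this yields ${}_\Lambda\Lambda\in\F(\Delta)$, i.e. that $(\Lambda,\leq)$ is standardly stratified. By Remark \ref{RKTR} the absence of proper oriented cycles makes $\leq_Q$ a partial order, so the linear refinements $\leq$ in the statement exist; fix one. The starting point is the radical square zero structure: for each $v\in Q_0$ the radical $\rad\,P(v)$ is semisimple and splits as $\rad\,P(v)=S(v)^{\mathrm{loop}(v)}\oplus N_v$, where $N_v:=\bigoplus_{\alpha\in Q_1,\;s(\alpha)=v,\;t(\alpha)\neq v}S(t(\alpha))$ gathers the targets of the non-loop arrows issuing from $v$.

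First I would record that for a non-loop arrow $v\to w$ (so $w\neq v$) the refinement property forces $v<w$: indeed $v\leq_Q w$, whence $v\leq w$, and $v\neq w$ gives $v<w$ in the linear order. Consequently every composition factor of $N_v$ is some $S(w)$ with $w>v$, while the loop contributions $S(v)^{\mathrm{loop}(v)}$ carry the label $v$. The key computation is to identify the trace $\Tr_{\oplus_{j>v}P_j}(P(v))$: since $\Lambda$ has radical square zero, a nonzero map $P_j\to P(v)$ with $j\neq v$ has image a copy of $S_j$ inside the semisimple module $\rad\,P(v)$, and $\Hom_\Lambda(P_j,P(v))\neq 0$ for $j\neq v$ exactly when there is a non-loop arrow $v\to j$. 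As all such $j$ satisfy $j>v$, summing these images yields $\Tr_{\oplus_{j>v}P_j}(P(v))=N_v$, so that $\Delta(v)=P(v)/N_v$. In particular, a vertex with no loop has $N_v=\rad\,P(v)$ and hence $\Delta(v)=S(v)$.

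Now comes the assembly. By hypothesis the loops of $Q$ occur only at quasi-sources, so the target $w$ of any non-loop arrow $v\to w$ is not a quasi-source and therefore carries no loop; by the last remark $S(w)=\Delta(w)$. Thus $N_v=\bigoplus S(t(\alpha))$ is a direct sum of standard modules, i.e. $N_v\in\F(\Delta)$, and since $P(v)/N_v=\Delta(v)$, the chain $0\subseteq N_v\subseteq P(v)$ extends a $\Delta$-filtration of $N_v$ to one of $P(v)$. Hence $P(v)\in\F(\Delta)$ for every $v\in Q_0$, and concatenating these filtrations shows ${}_\Lambda\Lambda\in\F(\Delta)$; that is, $(\Lambda,\leq)$ is standardly stratified. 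I expect the one point demanding care to be the trace computation of the previous paragraph: it must be carried out purely from the radical square zero structure and the quiver combinatorics, since at this stage one may not yet invoke the good homological properties of $\F(\Delta)$ (closure under direct summands, functorial finiteness) that are only guaranteed once $\Lambda$ is already known to be standardly stratified.
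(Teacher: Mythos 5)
Your proof is correct, and it reaches the conclusion by a genuinely more direct route than the paper. The paper's proof fixes a vertex $v$, forms the subquiver $C(v)$ of all successors of $v$, and runs a layered induction from the sinks upward (through the sets $C^{(1)}(v), C^{(2)}(v),\dots$) to establish that every loop-free vertex $l$ satisfies $\Delta(l)=S(l)$ and $P(l)\in\F(\Delta)$; the looped quasi-sources are then handled as a separate second case, via $\Delta(w)=P(w)/U(w)$ with $U(w)$ a sum of simples already known to be standard. You eliminate the induction entirely by computing, for every vertex $v$ at once, the trace that defines the standard module: any nonzero map $P_j\to P(v)$ with $j\neq v$ has image a copy of $S_j$ inside the semisimple module $\rad\,P(v)$; such copies fill out exactly the $S_j$-isotypic components of $\rad\,P(v)$; and the refinement hypothesis places every non-loop target strictly above $v$, so $\Tr_{\oplus_{j>v}P_j}(P(v))=N_v$ and $\Delta(v)=P(v)/N_v$ uniformly. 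After that, both proofs close the same way (targets of non-loop arrows are not quasi-sources, hence loop-free, hence their simples are standard, so $0\subseteq N_v\subseteq P(v)$ is a $\Delta$-filtration), but your assembly needs no case distinction between looped and loop-free $v$. The trade-off: the paper's inductive scheme is self-contained combinatorial bookkeeping, while your argument rests on the trace identification, which you correctly verify and rightly flag as the one delicate step; in particular you never invoke closure of $\F(\Delta)$ under direct summands, which indeed must not be used before standard stratification is established. Net effect: the same two hypotheses enter at the same two places, but your proof is shorter and induction-free.
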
 
\begin{dem} Note that, by Remark \ref{RKTR}, the pair $(\leq_Q,Q_0)$ is a partially ordered set.  Let $\leq $  be a linear order  on $Q_0$ which is a refinement of $\leq_Q.$ For any $v\in Q_0,$ we assert that 
$$(*)\quad \mathrm{loop}(v)=0\quad\Rightarrow\quad \Delta(v)=S(v)\;\text{and}\; P(v)\in\F(\Delta).$$
Indeed, let $v\in Q_0$ be such that $\mathrm{loop}(v)=0.$ Denote by $C(v)$ the subquiver of $Q$ given by all the oriented paths starting at the 
vertex $v.$ Since the possible loops in $Q$ appear only in quasi-sources,  it follows  that $\mathrm{loop}(x)=0,$ for any vertex $x$ in $C(v).$ 
\

Consider the set $C^{(1)}(v)$ formed by all the sink vertices in $C(v).$ Note that $\Delta(k)=S(k)=P(k)$ for any $k\in C^{(1)}(v).$ Inductively, we 
 define the set $C^{(i+1)}(v)$ formed by all the vertices in $C(v)$ whose immediate successor belongs to $\bigcup_{j=1}^i\,C^{(j)}(v).$ Using that 
 $\Delta(k)=S(k)$ for any vertex $k$ in $\bigcup_{j=1}^i\,C^{(j)}(v),$ it can be shown that $\Delta(l)=S(l)$ and $P(l)\in\F(\Delta)$ for any vertex 
 $l\in C^{(i+1)}(v).$ Since the set $C(v)$ is finite, there is some natural number $m$ such that $v\in C^{(m)}(v) $ and thus the assertion in $(*)$ follows.
\

Let $w\in Q_0$ be such that $t:=\mathrm{loop}(w)\geq 1.$ There are $r$ possible 
arrows $w\to w_i$ with $w\neq w_i$ for any $i\in[1,r].$ Since the possible loops in $Q$ appear only in quasi-sources,   it follows that 
$\mathrm{loop}(w_i)=0$ for any $i.$ Then, by $(*),$ we have that $\Delta(w_i)=S(w_i)$ for any $i.$ Note that $P(w)$ has a $\Delta$-filtration, since 
$\Delta(w)=P(w)/U(w)$ and $U(w)=\bigoplus_{i=1}^r\,\Delta(v_i).$
\end{dem}
 
\begin{teo}\label{TR3} For $\Lambda=k Q/J^2,$ the following statements are equivalent.
\begin{itemize}
\item[(a)] $(\Lambda,\leq)$ is a standardly stratified $k$-algebra for some linear refinement $\leq$ of $\leq_Q.$
\item[(b)] $Q$ does not have proper oriented cycles and the possible loops in $Q$ appear only in quasi-sources.
\item[(c)] $(\Lambda,\leq)$ is a standardly stratified $k$-algebra for any linear refinement $\leq$ of $\leq_Q.$
\end{itemize}
Moreover, if one of the above equivalent conditions holds true, then the $\Delta$-Cartan matrix  $C_\Delta$ is diagonal. Moreover, for any $i\in Q_0$
 $$d_i:=[C_\Delta]_{ii}=1+\mathrm{loop}(i)=\dim_k\End_\Lambda(P(i))=\dim_k\End_\Lambda(\Delta(i)),$$
 and thus $G_\Lambda\simeq\bigoplus_{i=1}^n\,\Z/d_i\Z.$
\end{teo}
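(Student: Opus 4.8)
The plan is to prove Theorem \ref{TR3} by establishing the cycle of implications (a) $\Rightarrow$ (b) $\Rightarrow$ (c) $\Rightarrow$ (a), after which the final numerical assertions follow from the already-proved Proposition \ref{TR1}. The two substantive implications are almost entirely packaged in the preceding propositions, so the work is to assemble them correctly rather than to produce new arguments. For (b) $\Rightarrow$ (c) I would invoke Proposition \ref{TR2} directly: it says exactly that if $Q$ has no proper oriented cycles and all loops sit at quasi-sources, then $(\Lambda,\leq)$ is standardly stratified for \emph{any} linear refinement $\leq$ of $\leq_Q$. The implication (c) $\Rightarrow$ (a) is immediate, since a statement holding for all linear refinements holds for some (one must only note that a refinement exists, which by Remark \ref{RKTR} requires $(Q_0,\leq_Q)$ to be a partial order; but condition (b)—already available once we close the loop—guarantees no proper oriented cycles, hence a genuine partial order admitting a linear extension).

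For (a) $\Rightarrow$ (b) the two required conclusions split naturally. First, that $Q$ has no proper oriented cycles: if $\leq$ is a refinement of $\leq_Q$ making $(\Lambda,\leq)$ standardly stratified, then I would argue as in Proposition \ref{RS03}, where exactly this setup (standardly stratified, radical square zero) is shown to force the absence of proper oriented cycles via Lemma \ref{RS01} and Lemma \ref{RS02}: a proper cycle would yield a standard module of infinite projective dimension, contradicting \cite[Proposition 1.8(i)]{AHLU}. Second, that loops appear only in quasi-sources: this is precisely the opening assertion of Proposition \ref{TR1}, whose hypotheses (radical square zero, $\leq$ a refinement of $\leq_Q$, standardly stratified) coincide with those of (a). So both halves of (b) are recorded results, and I would simply cite them.

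With the equivalence in hand, the ``moreover'' clause is the content of Proposition \ref{TR1}: under (a) (equivalently (b) or (c)) we have, for each $i\in Q_0$, the equality $[\Delta(i):S(j)]=(1+\mathrm{loop}(i))\delta_{ij}$ and $1+\mathrm{loop}(i)=\dim_k\End_\Lambda(P(i))=\dim_k\End_\Lambda(\Delta(i))$. The first of these says exactly that $C_\Delta$, whose $(j,i)$ entry is $[\Delta(i):S(j)]$, is the diagonal matrix $\diag(d_1,\dots,d_n)$ with $d_i=1+\mathrm{loop}(i)$. To produce the final isomorphism $G_\Lambda\simeq\bigoplus_{i=1}^n\Z/d_i\Z$ I would feed this diagonal $\Delta$-Cartan matrix into Corollary \ref{Delta-C}(a), which gives $G_\Lambda\simeq\Coker(C_\Delta)$, and then use the elementary computation $\Coker(\diag(d_1,\dots,d_n))=\bigoplus_i\Z/d_i\Z$ that underlies Proposition \ref{detord}.

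The main obstacle, such as it is, is bookkeeping rather than mathematics: one must check that the hypotheses of Proposition \ref{TR1} and Proposition \ref{TR2} are genuinely met in each implication, in particular that a linear refinement of $\leq_Q$ exists so that the statements are not vacuous, and that condition (b) is symmetric enough to feed Proposition \ref{TR2} for the (b) $\Rightarrow$ (c) direction. I would also take care that the standardly stratified hypothesis in Proposition \ref{TR1} is available precisely when assuming (a), so that the loop-at-quasi-source conclusion is legitimately imported; the only delicate point is confirming that ``some refinement'' in (a) suffices to trigger Proposition \ref{TR1}, which it does since that proposition assumes only a single refinement order. No new estimates are needed.
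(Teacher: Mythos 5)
Your proposal is correct and takes essentially the same route as the paper: (a) $\Rightarrow$ (b) from Proposition \ref{RS03} (with Remark \ref{RKTR}) together with the first assertion of Proposition \ref{TR1}, (b) $\Rightarrow$ (c) from Proposition \ref{TR2}, (c) $\Rightarrow$ (a) as the trivial implication, and the ``moreover'' clause from Proposition \ref{TR1} fed into Corollary \ref{Delta-C}(a). The one blemish is your parenthetical defense of (c) $\Rightarrow$ (a): invoking (b) ``once we close the loop'' is circular inside the cycle (a) $\Rightarrow$ (b) $\Rightarrow$ (c) $\Rightarrow$ (a) --- the paper simply declares this implication trivial (tacitly reading (c) as non-vacuous, i.e.\ assuming a linear refinement exists) --- and if you want to treat the existence issue honestly, the clean repair is to prove (b) $\Rightarrow$ (a) directly, since (b) makes $(Q_0,\leq_Q)$ a poset by Remark \ref{RKTR}, hence linear refinements exist and Proposition \ref{TR2} supplies one that works, rather than to borrow (b) while deducing (a) from (c).
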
 
\begin{dem} (a) $\Rightarrow$ (b) By Proposition \ref{RS03} and Remark \ref{RKTR} , we have that $Q$ does not have proper oriented cycles. Moreover, from Proposition \ref{TR1} we obtain that the possible loops in $Q$ appear only in quasi-sources.
\

(b) $\Rightarrow$ (c) This is Proposition \ref{TR2}. Finally, the implication (c) $\Rightarrow$ (a) is trivial.
\

Assume now that one of the above conditions hold true. Then, by Proposition \ref{TR1}, we get that the $\Delta$-Cartan matrix $C_\Delta$ is diagonal. Moreover, $d_i:=[C_\Delta]=1+\mathrm{loop}(i)=\dim_k\End_\Lambda(P(i))=\dim_k\End_\Lambda(\Delta(i)),$ for any vertex $i\in Q_0.$ Then, by 
Corollary \ref{Delta-C} (a), we conclude that $G_\Lambda\simeq\bigoplus_{i=1}^n\,\Z/d_i\Z.$
\end{dem}

\footnotesize

\vskip3mm \noindent Eduardo Marcos:\\
Instituto de Matem\'aticas y Estadistica,\\
Universidad de Sao Paulo,\\
Sao Paulo, BRASIL.

{\tt enmarcos@gmail.com}

\vskip3mm \noindent Octavio Mendoza:\\
Instituto de Matem\'aticas,\\
Universidad Nacional Aut\'onoma de M\'exico,\\
Circuito Exterior, Ciudad Universitaria,\\
M\'exico D.F. 04510, M\'EXICO.

{\tt omendoza@matem.unam.mx}

\vskip3mm \noindent Corina S\'aenz:\\
Departamento de Matem\'aticas, Facultad de Ciencias,\\
Universidad Nacional Aut\'onoma de M\'exico,\\
Circuito Exterior, Ciudad Universitaria,\\
M\'exico D.F. 04510, M\'EXICO.

{\tt corina.saenz@gmail.com}

\end{document}